\def\parsetime#1#2#3#4#5\empty{#1#2:#3#4}
\def\parsedate#1:20#2#3#4#5#6#7#8\empty{20#2#3/#4#5/#6#7-\parsetime#8\empty}
\def\moddate#1{\expandafter\parsedate\pdffilemoddate{#1}\empty}
\newcommand{\timestamp}{%
\ifdraft{\textcolor{red}{\moddate{\jobname.tex}}}{\today}}
\def\ifdraft{\ifdim\overfullrule>\z@
  \expandafter\@firstoftwo\else\expandafter\@secondoftwo\fi}
\newtheorem{theorem}{Theorem}[section]
\newtheorem*{theorem*}{Theorem}
\newtheorem{remark}[theorem]{Remark}
\newtheorem{corollary}[theorem]{Corollary}
\newtheorem*{test*}{Test}
\numberwithin{theorem}{section}
\numberwithin{equation}{section}
\newcommand{\E}{\mathbb{E}}
\newcommand{\N}{\mathbb{N}}
\newcommand{\R}{\mathbb{R}}
\newcommand{\V}{\mathbb{V}}
\newcommand{\X}{\bm{X}}
\newcommand{\U}{\bm{U}}
\newcommand{\Prob}{\mathbb{P}}
\newcommand{\vx}{\bm{x}}
\newcommand{\vy}{\bm{y}}
\newcommand{\vu}{\bm{u}}
\newcommand{\vsx}{\bm{x}^{[N]}}
\newcommand{\vsu}{\bm{u}^{[N]}}
\newcommand{\vsX}{\bm{X}^{[N]}}
\newcommand{\vsU}{\bm{U}^{[N]}}
\newcommand{\Mskript}{\mathcal{M}}
\newcommand{\One}{\mathds 1}
\newcommand{\Cov}{\operatorname{Cov}}
\newcommand{\ii}{\mathrm{i}}
\newcommand{\ee}{\mathrm{e}}
\newcommand{\hN}{\mbox{}^{\scriptscriptstyle N}\kern-1.5pt}
\newcommand{\pairslr}[3]{\left#1 #2 \right#3}
\newcommand{\pairscs}[4]{{\csname#1l\endcsname #2} #3 {\csname#1r\endcsname #4}}
\newcommand{\pairs}[4][lr]{%
        \ifthenelse{\equal{#1}{}}{%
                #2 #3 #4}{%
                \ifthenelse{\equal{#1}{lr}}{\pairslr{#2}{#3}{#4}}{%
                        \pairscs{#1}{#2}{#3}{#4}}}}
\newcommand{\Expect}[2][lr]{\E\ifthenelse{\equal{#1}{lr}}{\!}{}\pairs[#1]{(}{#2}{)}}
\newcommand{\Var}[2][lr]{\V\ifthenelse{\equal{#1}{lr}}{\!}{}\pairs[#1]{(}{#2}{)}}
\newcommand{\cov}[3][lr]{\Cov\ifthenelse{\equal{#1}{lr}}{\!}{}\pairs[#1]{(}{#2,#3}{)}}
\newcommand{\normal}{\color{black}}
\newenvironment{keywords}{\textit{Keywords:}}{}
\newenvironment{amscode}{\textit{2010 MSC:}}{}
\begin{document}
\title{Copula versions of distance multivariance and dHSIC via the distributional transform -- a general approach to construct invariant dependence measures}
\author{Bj\"{o}rn B\"{o}ttcher\footnote{TU Dresden, Fakult\"at Mathematik, Institut f\"{u}r Mathematische Stochastik, 01062 Dresden, Germany, email: \href{mailto:bjoern.boettcher@tu-dresden.de}{bjoern.boettcher@tu-dresden.de}}}

\date{}
\maketitle

\begin{abstract}
The multivariate Hilbert-Schmidt-Independence-Criterion (dHSIC) and distance multivariance allow to measure and test independence of an arbitrary number of random vectors with arbitrary dimensions. Here we define versions which only depend on an underlying copula. The approach is based on the distributional transform, yielding dependence measures which always feature a natural invariance with respect to scalings and translations. Moreover, it requires no distributional assumptions, i.e., the distributions can be of pure type or any mixture of discrete and continuous distributions and (in our setting) no existence of moments is required.

Empirical estimators and tests, which are consistent against all alternatives, are provided based on a Monte Carlo distributional transform. In particular, it is shown that the new estimators inherit the exact limiting distributional properties of the original estimators. Examples illustrate that tests based on the new measures can be more powerful than tests based on other copula dependence measures.
\end{abstract}

\begin{keywords}
 multivariate independence tests, copula-based measures, distributional transform, distance multivariance, dHSIC
\end{keywords}

\begin{amscode}
 Primary 62H20,   	
Secondary 	62H15  	
\end{amscode}

\section{Introduction}

The detection and quantification of dependencies is essential for almost every statistical analysis. Although this is an old topic there have been recently several new contributions for the case of testing independence of multiple variables. We will focus here specifically on two: Distance multivariance \cite{BoetKellSchi2019,Boet2020} and the multivariate Hilbert-Schmidt-Independence-Criterion (dHSIC) \cite{PfisBuehSchoPete2017}. Multivariance includes Pearson's correlation and the RV-coefficient \cite{RobeEsco1976} as limiting cases and unifies the bivariate distance covariance \cite{SzekRizzBaki2007} and the  Hilbert-Schmidt-Independence-Criterion \cite{GretBousSmolScho2005}. For the latter dHSIC provides an alternative multivariate extension. As in the special case of Pearson's correlation the values of these measures are influenced by the actual dependence structure and the marginal distributions. 

For standardized comparisons a removal of the influence of the marginal distributions is of interest, it provides the foundation for a direct comparison based on the values of a measure, rather than a comparison of the corresponding p-values. Attempts to define dependence measures based on copulas go back at least to \cite{Wolf1977}. A recent approach of \cite{GeneNesRemiMurp2019} uses a Cram\'er-von Mises type test statistic, i.e.\ it
is based on an $L^2$-distance of distribution functions.
This structurally corresponds to approaches which are based on $L^2$-distances of characteristic functions. In fact the estimator $T_n$ of \cite{GeneNesRemiMurp2019} corresponds formally to the approach of multivariance, and the estimator $S_n$ of \cite{GeneNesRemiMurp2019} corresponds formally to the approach of dHSIC \cite{PfisBuehSchoPete2017} (see also Section 3.3 of \cite{Boet2020}). Therefore we will consider in this paper both: dHSIC and distance multivariance.

Our approach is related to \cite{GeneNesRemi2013}, where a multivariate extension of Spearman's rho was considered and the so called distributional transform (see Section \ref{sec:disttrans} below for details) was a key tool, see also \cite{Nes2007} for the foundations.

A theoretic (population) approach to transform any dependence measure into a measure which only depends on the copula consists of two steps:
1. Transform the random variables such that their distribution function is nothing but the copula.
2. Use the new random variables in the measure.

A slight difficulty with this approach is the fact that for non-continuous random variables the corresponding copula is in general not unique. But note, by an appropriate choice of the transformation procedure the resulting copula becomes unique.

Thus a natural approach to the corresponding sample version of the measure is:
1. Transform the samples to samples of the copula.
2. Use the new samples in the measure. 
For practical applications this shows that an explicit estimation of the copula is superficial, one only requires a method to transform samples into samples of the copula.

A closely related approach transforms each margin by its distribution function. This is well established standard, see e.g.\ \cite[Section 1]{Remi2009} for the setting of distance covariance, \cite[Section 2.4]{ChakZhan2019} for joint distance covariance (which is closely related to total distance multivariance) and \cite{PoczGhahSchn2012} for the setting of HSIC (and e.g.\ \cite{RoyGoswMurt2017,RoyGhosGosw2019} for more recent variants). The method yields for general marginals a 'rank-based' measure. For continuous marginals its population version coincides with the method which we propose, but for marginals with discrete components our method will have the key advantage that also in this case the marginals are always uniformly distributed. In general, the ranks of the transformed samples in our approach would coincide with classical ranks obtained using randomized tie-breaking.

For discrete distributions (in particular if the distribution is concentrated only on a few values) a transformation to the uniform distribution will make the dependence less pronounced. Thus one can not expect that the above approach will always improve the dependence detection. Nevertheless, the examples in Section \ref{sec:ex} show several cases where an improvement occurs.

\enlargethispage{\baselineskip}
In the next section we recall the distributional transform and provide a general framework to transform any multivariate dependence measure into a measure which only depends on the copula. The framework extends the work of \cite{Nes2007} and \cite{Rues2009} by focusing on the distributional properties of the corresponding sample versions. Thereafter the copula version of distance multivariance (Section \ref{sec:copm}) and dHSIC (Section \ref{sec:dhsic}) are defined and analysed. Examples are provided in Section \ref{sec:ex} and technical proofs are collected in Section \ref{sec:proofs}. In Section \ref{sec:summary} a short summary and outlook is given. In a supplement\footnote{pages \pageref{sec:supp} ff.\ of this manuscript} further simulations are provided, extending the discussions and parameter settings of the main examples.

\section{Construction of invariant dependence measures} \label{sec:disttrans}
 The following approach can be used to get invariance with respect to strictly increasing transformations for any dependence measure (actually for any function which maps random variables to some value). 
The key tool will be the so called \textit{distributional transform}, see e.g.\ \cite{Rues2009,Nes2007} and the references given therein. This 
transforms any random variable to a uniformly distributed random variable while preserving (in)dependencies between variables. For marginals with continuous distributions the population version of this method becomes the classical transformation using solely the marginal distribution function. 

The distributional transform is a one-dimensional concept. In a multivariate setting it is applied to each one-dimensional component separately. To avoid difficulties by overloading notations we discuss the one-dimensional setting first and the multivariate setting thereafter. The latter fixes the notation for the rest of the paper.

\subsection{The (univariate) distributional transform}\label{sec:disttransuni}

For a univariate random variable $X$ and $x\in \R, u\in [0,1]$ define
\begin{equation} \label{eq:def-dist-trans}
T_X(x,u):= \Prob(X< x) + u\Prob(X=x)
\end{equation} 
and let $U$ be an independent uniformly distributed random variable.
Then the distributional transform of $X$ is the random variable $T_X(X,U)$ and it has the following properties.

\begin{theorem} \label{thm:dist-trans}
With the above notations and $F_X(x):= \Prob(X\leq x)$:
\begin{enumerate}
\item $T_X(X,U)$ is uniformly distributed.
\item $X = F_X^{-1}(T_X(X,U))$ almost surely.
\item The distributional transform is invariant with respect to strictly increasing transformations, in particular translations and scalings, i.e., for any $a>0$ and $b \in \R$:
\begin{equation}
T_{aX+b}(aX+b,U) = T_X(X,U).
\end{equation}
\item \label{thm:dist-trans:dep} Univariate random variables $X_1,\ldots,X_n$ and their distributional transforms $T_1(X_1,U_1),\ldots,T_n(X_n,U_n) $ (with $U_i$ being independent and uniformly distributed) have the same copula.\footnote{Univariate random variables $X_1,\ldots,X_n$ \textit{have the copula} $C$ if $F_{(X_1,\ldots,X_n)}(x_1,\ldots,x_n) = C(F_{X_1}(x_1),\ldots,F_{X_n}(x_n))$ for all $x_i \in \R$; the independence copula is $\Pi(x_1,\ldots,x_n)= \prod_{i=1}^n x_i.$} In particular, this is the independence copula if and only if the random variables are independent. (To avoid confusion, note that if the distribution of some $X_i$ has a discrete component the copula describing the dependence of $X_1,\ldots,X_n$ is not unique - but one of these is the unique copula given via the distributional transform.)
\end{enumerate}
\end{theorem}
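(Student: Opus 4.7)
I would prove the four items in the given order, since the latter parts can be reduced to the earlier ones. Items (1)--(3) are essentially classical properties of the quantile construction; item (4) is the real content, but it follows by combining (1) and (2) with Sklar's relation.

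For (1), I would compute $\Prob(T_X(X,U)\le t)$ for $t\in[0,1]$ by conditioning on $X$. Split the law of $X$ into its atomic part with atoms $\{x_k\}$ and its continuous part. Conditionally on $X=x_k$, $T_X(x_k,U)=F_X(x_k-)+U(F_X(x_k)-F_X(x_k-))$ is uniform on $[F_X(x_k-),F_X(x_k)]$, an interval of length $\Prob(X=x_k)$. On the continuous part, $T_X(X,U)=F_X(X)$, and the classical probability-integral-transform argument shows $F_X(X)$ is uniform on the image of the continuous part. Together these contributions tile $[0,1]$ with uniform mass. For (2), I would use the generalized-inverse identity $F_X^{-1}(v)\le x\iff v\le F_X(x)$ together with the observation that $T_X(X,U)\in(F_X(X-),F_X(X)]$ almost surely (since $U>0$ a.s.), which forces $F_X^{-1}(T_X(X,U))=X$ almost surely. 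For (3), the claim reduces to a one-line definitional check: if $g$ is strictly increasing then $\Prob(g(X)<g(x))=\Prob(X<x)$ and $\Prob(g(X)=g(x))=\Prob(X=x)$ for every $x$, hence $T_{g(X)}(g(X),U)=T_X(X,U)$.

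For (4), set $V_i:=T_i(X_i,U_i)$. By (1) each $V_i$ is uniform on $[0,1]$, and by (2) we have $X_i=F_{X_i}^{-1}(V_i)$ almost surely. Therefore
\begin{align*}
F_{(X_1,\ldots,X_n)}(x_1,\ldots,x_n)
&=\Prob\bigl(F_{X_i}^{-1}(V_i)\le x_i,\ i=1,\ldots,n\bigr)\\
&=\Prob\bigl(V_i\le F_{X_i}(x_i),\ i=1,\ldots,n\bigr)\\
&=F_{(V_1,\ldots,V_n)}\bigl(F_{X_1}(x_1),\ldots,F_{X_n}(x_n)\bigr),
\end{align*}
and $F_{(V_1,\ldots,V_n)}$, having uniform margins, is a copula of $(X_1,\ldots,X_n)$. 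This copula is well-defined (even when the $X_i$'s have atoms and Sklar's copula is not unique) because it is literally the joint law of the constructed $V_i$'s. For the equivalence with independence: the $U_i$ can be taken jointly independent of the $X_j$'s, so independence of the $X_i$'s passes to the $V_i$'s, giving $C=\Pi$; conversely, if $C=\Pi$ then the $V_i$ are independent, hence so are the $X_i=F_{X_i}^{-1}(V_i)$.

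The main obstacle is the atom bookkeeping in (1): one has to check that the randomisation by $U$ precisely fills the jumps of $F_X$ without overlap, so that the resulting law has no gaps and no excess mass on $[0,1]$. Everything else is algebraic manipulation built on top of (1) and the generalized-inverse identity $F_X^{-1}(v)\le x\iff v\le F_X(x)$.
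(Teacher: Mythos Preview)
Your proposal is correct and matches the paper's approach. For item (3) your argument is literally identical to the paper's; for items (1), (2), (4) the paper simply cites \cite{Nes2007} and \cite{Rues2009}, while you spell out the underlying arguments (conditioning on atoms for (1), the generalized-inverse Galois relation $F_X^{-1}(v)\le x\iff v\le F_X(x)$ for (2) and (4)), so you are effectively reproducing the content of those references rather than taking a different route.
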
 
\begin{proof}
The first two statements are classical, e.g.\ \cite[Lemma 3]{Nes2007} and \cite[Proposition 2.1]{Rues2009}. The third statement is a direct consequence of the following elementary identity for any strictly increasing transformation $g$ (see also \cite{SchwWolf1981}):
\begin{equation}
\begin{split}
T_{g(X)}(g(x),u)&= \Prob(g(X) < g(x)) + u \Prob(g(X) = g(x))\\
&= \Prob(X < x) + u \Prob(X = x) \phantom{mmmmmmm} = T_X(x,u).
\end{split}
\end{equation}
The last statement is the multivariate formulation of \cite[Proposition 4]{Nes2007}, see also \cite[Theorem 2.2]{Rues2009} and its proof.
\end{proof}

\begin{remark}
The copula corresponding to the random variables transformed by the distributional transform, i.e., the distribution function of $(T_1(X_1,U_1),\ldots, T_n(X_n,U_n)),$ is for discontinuous marginals also known as checkerboard copula \cite[Definition 1]{GeneNesRemiMurp2019} or multilinear extension copula. 
\end{remark}

To get the corresponding sample version one could follow the approach of \cite{GeneNesRemi2013} where especially the variable $U$ is replaced by its expectation $1/2.$ In slight variation to this we will finally replace $U$ by random samples of the uniform distribution, i.e., we use an empirical Monte Carlo version of the distributional transform. By this the limiting marginals are always uniformly distributed. 

For $x\in \R$, $u\in [0,1]$ the univariate empirical distributional transform based on the sample sequence $x^{(1)},\ldots,x^{(N)} \in \R$ (denoted by $x^{[N]}$) is
\begin{equation}
\hN T(x,u; x^{[N]}) = 
\hN T(x,u;x^{(1)},\ldots, x^{(N)}) := \frac{1}{N} \sum_{k=1}^N \left[ \One_{(-\infty,x)}(x^{(k)}) + u \One_{\{x\}}(x^{(k)}) \right],
\end{equation}
and it has the following approximation property. 
\begin{theorem}[Fundamental theorem of the empirical distributional transform] \label{thm:et-funda}
Let $X^{(k)}, k\in\N$ be independent copies of a random variable $X$, then
\begin{equation} \label{eq:funda-et}
\lim_{N\to\infty} \sup_{\substack{x\in\R \\ u\in[0,1]}} \left|\hN T(x,u;X^{(1)},\ldots, X^{(N)}) - T_X(x,u)\right| = 0 \text{ almost surely. }
\end{equation}  
\end{theorem}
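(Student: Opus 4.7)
The plan is to reduce the statement to two applications of the classical Glivenko--Cantelli theorem, one for the ordinary empirical distribution function and one for its left-continuous analogue, exploiting the affine (in $u$) structure of $T_X$ and $\hN T$.

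First I would introduce the empirical counterparts
\begin{equation*}
F_N(x):=\frac{1}{N}\sum_{k=1}^N\One_{(-\infty,x]}(X^{(k)}),\qquad F_N^{-}(x):=\frac{1}{N}\sum_{k=1}^N\One_{(-\infty,x)}(X^{(k)}),
\end{equation*}
together with the deterministic quantities $F(x)=\Prob(X\leq x)$ and $F^{-}(x)=\Prob(X<x)$. With this notation one can write
\begin{equation*}
\hN T(x,u;X^{[N]})=F_N^{-}(x)+u\bigl(F_N(x)-F_N^{-}(x)\bigr),\qquad T_X(x,u)=F^{-}(x)+u\bigl(F(x)-F^{-}(x)\bigr),
\end{equation*}
so after subtracting and regrouping
\begin{equation*}
\hN T(x,u;X^{[N]})-T_X(x,u)=(1-u)\bigl(F_N^{-}(x)-F^{-}(x)\bigr)+u\bigl(F_N(x)-F(x)\bigr).
\end{equation*}
Since $u\in[0,1]$, taking absolute values and suprema yields the decoupled bound
\begin{equation*}
\sup_{x,u}\left|\hN T(x,u;X^{[N]})-T_X(x,u)\right|\leq\sup_{x\in\R}|F_N^{-}(x)-F^{-}(x)|+\sup_{x\in\R}|F_N(x)-F(x)|.
\end{equation*}

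It then remains to show that both suprema vanish almost surely as $N\to\infty$. The second is exactly the Glivenko--Cantelli theorem applied to the i.i.d.\ sequence $X^{(k)}$. For the first, the cleanest route is to apply Glivenko--Cantelli to $-X^{(k)}$: the resulting right-continuous distribution function is $x\mapsto\Prob(-X\leq x)=\Prob(X\geq -x)=1-F^{-}(-x)$, and its empirical version agrees with $1-F_N^{-}(-x)$, so uniform convergence on $\R$ transfers verbatim to $\sup_x|F_N^{-}(x)-F^{-}(x)|\to0$ a.s. Combining both convergences with the bound above gives \eqref{eq:funda-et}.

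The only non-routine point is the left-continuous Glivenko--Cantelli statement, since the standard formulation is stated for $F_N$ and $F$; but as sketched this is obtained by a one-line reflection argument from the classical theorem, so no genuinely new estimate is needed. Everything else is the affine-in-$u$ bookkeeping that isolates the $x$-uniformity from the trivial $u$-dependence.
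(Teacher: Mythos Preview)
Your proof is correct and follows essentially the same route as the paper: both reduce the statement to the Glivenko--Cantelli theorem applied to $F_N$ and its left-continuous analogue $F_N^{-}$. Your affine-in-$u$ decomposition and the reflection trick for $F_N^{-}$ make the uniformity in $u$ and the left-limit case more explicit than the paper's terse appeal to uniform continuity on the bounded parameter set, but these are presentational refinements rather than a genuinely different argument.
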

\begin{proof}
The statement is a direct consequence of the fundamental theorem of statistics, the Gliwenko-Cantelli theorem (e.g.\ \cite[Thm.\ (7.4)]{Durr2005}), which states the almost sure uniform convergence of the empirical distribution function $\hN F(.):=\frac{1}{N} \sum_{k=1}^N \One_{(-\infty,.]}(X^{(k)})$ to the distribution function $F_X(.)$. Hence also $P(X<x)=F_X(x-)$ and $P(X=x)=F_X(x)-F_X(x-)$ are uniformly approximated for all $x\in\R$. Moreover the extra parameter $u$ is restricted to a bounded set (on which the functions are uniformly continuous), thus \eqref{eq:funda-et} holds.
\end{proof}

\subsection{The distributional transform in a multivariate setting} \label{sec:disttransmulti}

In the following let $\X = (X_1,\ldots,X_n)$ where each $X_i$ is a (possibly multivariate) random variable with values in $\R^{d_i}$, i.e., $X_i = (X_{i,1},\ldots,X_{i,d_i})$ and each $X_{i,k}$ is univariate. Moreover, independent copies of $\X$ are denoted by $\X^{(1)}, \ldots, \X^{(N)}$ and the latter sequence is denoted by $\vsX.$ 
The same notation is used for $\U$ (assuming additionally that all $U_{i,k}$ are independent uniformly distributed, and that these are also independent of all $X_{i,k}$). The capital letters $X$ and $U$ are replaced by $x$ and $u$, respectively, to denote corresponding samples. In this setting the distributional transform is just the vector of the distributional transforms of the components, i.e.\ using the univariate distributional transform defined via \eqref{eq:def-dist-trans} we set 
\begin{equation} 
\begin{split}
\label{eq:notation-T}
&T_{\X}(\vx,\vu) := (T_{X_1}(x_1,u_1),\ldots,T_{X_n}(x_n,u_n))\\
 \text{ where }
&T_{X_i}(x_i,u_i) := (T_{X_{i,1}}(x_{i,1},u_{i,1}),\ldots,T_{X_{i,d_i}}(x_{i,d_i},u_{i,d_i}))
\end{split}
\end{equation} 
and thus the (multivariate) distributional transform of $\X$ is $T_{\X}(\X,\U)$. By considering subgroups of the variables jointly, i.e., those represented by $X_1,\ldots,X_n$, the statement \ref{thm:dist-trans:dep} of Theorem \ref{thm:et-funda} translates directly to the multivariate setting. Thus the (in)dependencies between the components are preserved by the transform, especially: 
\begin{corollary} \label{cor:indep}
The random variables $X_1,\ldots,X_n$ are independent if and only if $T_{X_1}(X_1,U_1),\ldots,T_{X_n}(X_n,U_n)$ are independent.
\end{corollary}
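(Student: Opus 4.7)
The plan is to prove both implications by reducing independence of the transformed vectors to a standard preservation-of-independence-under-measurable-maps argument, exploiting properties (i)--(ii) of Theorem \ref{thm:dist-trans} componentwise. The key structural observation is that under the notational conventions of Section \ref{sec:disttransmulti} the auxiliary vector $\U$ has independent coordinates and is globally independent of $\X$, so all independence manipulations between the $(X_i,U_i)$ and between the transforms $T_{X_i}(X_i,U_i)$ go through cleanly.

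For the forward direction, assume $X_1,\ldots,X_n$ are independent as random vectors. Since the $U_{i,k}$ are independent across $i$ and $k$ and jointly independent of $\X$, grouping them as $U_i=(U_{i,1},\ldots,U_{i,d_i})$ and pairing with the $X_i$ shows that the $n$ vectors $(X_1,U_1),\ldots,(X_n,U_n)$ are mutually independent. The map $(x_i,u_i)\mapsto T_{X_i}(x_i,u_i)$ defined in \eqref{eq:notation-T} is Borel measurable (each coordinate $T_{X_{i,k}}(\cdot,\cdot)$ is a Borel function on $\R\times[0,1]$ by \eqref{eq:def-dist-trans}), so applying these measurable maps coordinatewise preserves independence and yields independence of $T_{X_1}(X_1,U_1),\ldots,T_{X_n}(X_n,U_n)$.

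For the backward direction, apply property (ii) of Theorem \ref{thm:dist-trans} to each univariate component: for every $i,k$ one has $X_{i,k}=F_{X_{i,k}}^{-1}\bigl(T_{X_{i,k}}(X_{i,k},U_{i,k})\bigr)$ almost surely. Stacking these identities gives that $X_i$ is, almost surely, a fixed Borel-measurable function of $T_{X_i}(X_i,U_i)$. Independence being a property of joint distributions and being preserved under both measurable transformations and almost-sure modifications, the assumed independence of $T_{X_1}(X_1,U_1),\ldots,T_{X_n}(X_n,U_n)$ transfers to $X_1,\ldots,X_n$.

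I do not expect any genuine obstacle; the only point that requires a moment of care is the backward direction, where one must justify that the almost-sure (rather than pointwise) nature of the identity $X_{i,k}=F_{X_{i,k}}^{-1}(T_{X_{i,k}}(X_{i,k},U_{i,k}))$ does not spoil the measurable-map argument. This is resolved by the standard fact that altering random variables on a null set leaves their joint law, hence any independence relation among them, invariant.
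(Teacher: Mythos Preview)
Your argument is correct. Both implications go through exactly as you describe: the forward direction is a standard measurable-map argument once the pairs $(X_i,U_i)$ are seen to be independent, and the backward direction follows from the componentwise inversion $X_{i,k}=F_{X_{i,k}}^{-1}(T_{X_{i,k}}(X_{i,k},U_{i,k}))$ a.s.\ (Theorem~\ref{thm:dist-trans}(ii)), together with the observation that independence is invariant under almost-sure modification. One small remark: property~(i) of Theorem~\ref{thm:dist-trans} is not actually used anywhere in your argument, so you could drop the reference to it.

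The paper takes a slightly different route. Rather than arguing each direction separately via measurable maps and the inversion formula, it deduces the corollary from part~\ref{thm:dist-trans:dep} of Theorem~\ref{thm:dist-trans}, which asserts that the original variables and their distributional transforms share the same copula (in particular the independence copula exactly when the variables are independent), and then observes that this statement carries over to the multivariate setting by grouping components. Your approach is more elementary and self-contained---it avoids the copula language entirely---while the paper's approach packages both directions into a single structural fact and makes the link to the surrounding copula framework explicit. Either argument is perfectly adequate here.
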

The empirical distributional transform of $\vx$ given $\vu$ and $\vsx$ is given by
\begin{align}
&\hN T(\vx,\vu;\vsx) := (\hN T(x_{1},u_{1};x^{[N]}_{1}),\ldots,\hN T(x_{n},u_{n};x_n^{[N]}))\\
\text{ where }
&\hN T(x_i,u_i;x_i^{[N]})
:= (\hN T(x_{i,1},u_{i,1};x^{[N]}_{i,1}),\ldots,\hN T(x_{i,d_i},u_{i,d_i};x^{[N]})).
\end{align}
Hence, the empirical distributional transform of the whole sample sequence $\vsx$ is
\begin{equation}
\hN T(\vsx,\vsu)
:= (\hN T(\vx^{(1)},\vu^{(1)};\vx^{[N]}),\ldots,\hN T(\vx^{(N)},\vu^{(N)};\vx^{[N]})).
\end{equation}
Note that by Theorem \ref{thm:et-funda} (applied to each component) $\hN T(\vsx,\vsu)$ approximates 
\begin{equation}
T_{\X}(\vsx,\vsu) 
:=(T_{\X}(\vx^{(1)},\vu^{(1)}) , \ldots , T_{\X}(\vx^{(N)},\vu^{(N)})).
\end{equation}
The latter is the explicit distributional transform applied to the samples. It is in practical applications usually unknown (since the true marginal distributions of the $X_{i,k}$ are unknown), but for theoretic considerations it will be essential.

\subsection{The distributional transform and multivariate dependence measures}
 
The preservation of (in)dependence by the distributional transform (Corollary \ref{cor:indep} and Theorem \ref{thm:dist-trans}) is the key to transform any dependence measure $d$ to a dependence measure which only depends on the copula. Hereto let $d:\R^{d_1}\times \cdots \times \R^{d_n} \to \R$, then its copula version is defined by (using the notation of Section \ref{sec:disttransmulti}) 
\begin{equation}\label{eq:dcop}
d_{cop}(X_1,\ldots,X_n):=d(T_{X_1}(X_1,U_1),\ldots,T_{X_n}(X_n,U_n)) = d(T_{\X}(\X,\U)).
\end{equation} 
Furthermore, suppose for the measure $d$ exists an estimator $\hN d$ then a natural candidate for an estimator for $d_{cop}$ is (using the notation of Section \ref{sec:disttransmulti}) 
\begin{equation}
\hN d_{cop}(\vsx,\vsu) := \hN d (\hN T(\vsx,\vsu)).
\end{equation}  
The estimator $\hN d (\hN T(\vsx,\vsu))$ is related by \eqref{eq:funda-et} to the theoretic estimator given by  $\hN d (T_{\X}(\vsx,\vsu))$. 
 The latter is important since it fits directly into any distribution- and limit theory for $\hN d$, hereto note that the $T_{\bm{X}}(\vx^{(k)},\vu^{(k)})$ are just samples of independent copies of  $(T_{X_1}(X_1,U_1),\ldots, T_{X_n}(X_n,U_n)).$
Using this relation the next theorem states that $\hN d_{cop}$ inherits the convergence properties of $\hN d$ if the latter is uniformly continuous. But note that this does in general not hold for scaled versions of these estimators, as we will discuss afterwards. 

\begin{theorem}[preservation of consistency]\label{thm:consi}
Let $\hN d$ be a consistent estimator for $d$ which is uniformly continuous on $[0,1]^{(d_1+\ldots+d_n)N}$ in the following sense: for all $\varepsilon>0$ exists $\delta_\varepsilon >0$ such that for all $N\in \N$
\begin{equation} \label{eq:unif-cont} 
\max_{1\leq k\leq N} |\vx^{(k)}-\vy^{(k)}|<\delta_\varepsilon \quad \Rightarrow \quad |\hN d(\vx^{(1)},\ldots,\vx^{(N)}) - \hN d(\vy^{(1)},\ldots,\vy^{(N)})|<\varepsilon,
\end{equation}
and let $\X^{(1)},\ldots,\X^{(N)}$ be independent copies of $\bm{X}$.
Then $\hN d(\hN T(\vsX,\vsU))$ and $\hN d(T_{\bm{X}}(\vsX,\vsU))$ converge to the same limit in the same mode of convergence.
\end{theorem}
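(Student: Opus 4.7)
The plan is to reduce the statement to two ingredients: (i) the fundamental theorem of the empirical distributional transform (Theorem~\ref{thm:et-funda}) and (ii) the uniform continuity hypothesis~\eqref{eq:unif-cont}. Together these force the two random quantities to differ by an amount that vanishes almost surely, which then automatically transfers any mode of convergence from one to the other.

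First I would apply Theorem~\ref{thm:et-funda} separately to each of the finitely many scalar coordinates $X_{i,k}$. Since the supremum in~\eqref{eq:funda-et} is over all $x\in\R$ and $u\in[0,1]$, a finite union bound yields
\begin{equation}
\sup_{\vx,\vu}\, \bigl| \hN T(\vx,\vu;\vsX) - T_{\bm X}(\vx,\vu) \bigr| \longrightarrow 0 \quad \text{a.s.}
\end{equation}
as $N\to\infty$. In particular, evaluating at the $N$ random sample points $(\vX^{(k)},\vU^{(k)})$ gives
\begin{equation}
\max_{1\leq k\leq N}\, \bigl| \hN T(\vX^{(k)},\vU^{(k)};\vsX) - T_{\bm X}(\vX^{(k)},\vU^{(k)}) \bigr| \longrightarrow 0 \quad \text{a.s.}
\end{equation}

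Next I would fix $\varepsilon>0$, pick the $\delta_\varepsilon$ furnished by~\eqref{eq:unif-cont}, and observe that on a full-measure event the maximum above eventually drops below $\delta_\varepsilon$; the uniform-in-$N$ character of the continuity hypothesis is crucial here because the domain $[0,1]^{(d_1+\cdots+d_n)N}$ itself grows with $N$. Applying~\eqref{eq:unif-cont} therefore gives
\begin{equation}
\bigl| \hN d(\hN T(\vsX,\vsU)) - \hN d(T_{\bm X}(\vsX,\vsU)) \bigr| \longrightarrow 0 \quad \text{a.s.}
\end{equation}

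Finally, since the vectors $T_{\bm X}(\vX^{(k)},\vU^{(k)})$ are i.i.d.\ copies of $T_{\bm X}(\bm X,\bm U)$, the consistency of $\hN d$ applied to this sample sequence gives convergence (in whatever mode is asserted) of $\hN d(T_{\bm X}(\vsX,\vsU))$ to $d(T_{\bm X}(\bm X,\bm U))=d_{cop}(\bm X)$. Since the difference with $\hN d(\hN T(\vsX,\vsU))$ tends to zero almost surely, and a.s.\ convergence implies convergence in probability, the conclusion now follows from the triangle inequality (for a.s.\ and $L^p$ convergence) or Slutsky's theorem (for convergence in distribution). The only potential obstacle I anticipate is the bookkeeping around the growing domain in the uniform-continuity assumption, but that is exactly what~\eqref{eq:unif-cont} is designed to handle, so the argument goes through cleanly.
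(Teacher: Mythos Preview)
Your proposal is correct and follows essentially the same route as the paper's proof: apply Theorem~\ref{thm:et-funda} componentwise to force the empirical and true distributional transforms uniformly close, feed this into the uniform-continuity hypothesis~\eqref{eq:unif-cont} to get almost sure vanishing of the difference, and then transfer convergence via a Slutsky-type argument. Your write-up is slightly more explicit about the coordinate-wise application and about the different modes of convergence, but the ideas and their order are the same.
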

\begin{proof}
Let $\varepsilon>0$ and set $\delta_\varepsilon$ as in \eqref{eq:unif-cont}. Then by Theorem \ref{thm:et-funda} we can find an $N_0$ such that $|\hN T(x,u;X_{i,k}^{(1)},\ldots,X_{i,k}^{(N)}) - T_{X_{i,k}}(x,u)| < \delta_\varepsilon$ almost surely for $N > N_0$ and uniformly in $x$ and $u$. Then by \eqref{eq:unif-cont}
\begin{equation} \label{eq:dNT-dT}
|\hN d(\hN T(\vsX,\vsU)) - \hN d(T_{\bm{X}}(\vsX,\vsU))| <\varepsilon.
\end{equation}
Thus the left hand side of \eqref{eq:dNT-dT} converges to 0 almost surely as $N \to \infty$. Finally, directly or by the general implication $Y_n \xrightarrow{\Prob} Y, Z_n \xrightarrow{\Prob} 0 \ \Rightarrow \  Y_n+Z_n \xrightarrow{\Prob} Y$ the convergence of 
\begin{equation}
\hN d(\hN T(\ldots)) = \hN d(T_{\bm{X}}(\ldots)) + \bigg(\hN d(\hN T(\ldots)) - \hN d(T_{\bm{X}}(\ldots))\bigg) 
\end{equation}
is inherited from $ \hN d(T_{\bm{X}}(\vsX,\vsU))$. 
\end{proof}

A direct consequence of Theorem \ref{thm:consi} is the convergence for estimators with representations of $V$-statistic type.

\begin{corollary}[preservation of consistency for estimators of $V$-statistic type]
If 
\begin{equation} \label{eq:vstatrep}
\hN d (\vx^{(1)},\ldots,\vx^{(N)}) = \frac{1}{N^m} \sum_{i_1,\ldots,i_m = 1}^N g(x_{i_1},\ldots,x_{i_m})
\end{equation}
for some $g$ which is continuous on $[0,1]^m$, then $\hN d(\hN T(\ldots))$ and $\hN d(T_{\bm{X}}(\ldots))$ converge to the same limit in the same mode of convergence.
\end{corollary}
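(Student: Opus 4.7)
The plan is to derive this corollary directly from Theorem \ref{thm:consi} by verifying the uniform continuity hypothesis \eqref{eq:unif-cont} for estimators of the $V$-statistic form \eqref{eq:vstatrep}. Since the $\hN d_{cop}$ construction always evaluates the base estimator on the output of the (empirical or true) distributional transform, the inputs lie in $[0,1]^{(d_1+\ldots+d_n)N}$, so only the restriction of $g$ to $[0,1]^m$ is relevant.

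The first step is to observe that $g$, being continuous on the compact set $[0,1]^m$, is automatically uniformly continuous there. Hence for each $\varepsilon>0$ there is a $\delta_\varepsilon>0$ such that whenever $\max_{1\leq j\leq m}|\vx_{i_j}-\vy_{i_j}|<\delta_\varepsilon$ we have $|g(\vx_{i_1},\ldots,\vx_{i_m})-g(\vy_{i_1},\ldots,\vy_{i_m})|<\varepsilon$. Crucially, this $\delta_\varepsilon$ depends on $\varepsilon$ only and not on $N$, which is exactly the kind of uniformity required by \eqref{eq:unif-cont}.

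The second step is to propagate this estimate to the $V$-statistic. If $\max_{1\leq k\leq N}|\vx^{(k)}-\vy^{(k)}|<\delta_\varepsilon$, then for every multi-index $(i_1,\ldots,i_m)$ the corresponding tuples agree up to $\delta_\varepsilon$ componentwise, so the pointwise bound on $g$ applies. Averaging and using the triangle inequality gives
\begin{equation}
|\hN d(\vx^{(1)},\ldots,\vx^{(N)})-\hN d(\vy^{(1)},\ldots,\vy^{(N)})|\leq \frac{1}{N^m}\sum_{i_1,\ldots,i_m=1}^N |g(\vx_{i_1},\ldots,\vx_{i_m})-g(\vy_{i_1},\ldots,\vy_{i_m})|<\varepsilon,
\end{equation}
uniformly in $N$. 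This is precisely condition \eqref{eq:unif-cont}, so Theorem \ref{thm:consi} applies and yields the claim.

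The only subtle point, and really the only thing one has to check, is that the modulus of continuity of $\hN d$ does not deteriorate with $N$; the normalisation $1/N^m$ in \eqref{eq:vstatrep} is exactly matched by the $N^m$ summands, so the bound is genuinely $N$-free. No further argument is needed, and no extra hypothesis on $g$ beyond continuity on $[0,1]^m$ is required because the distributional transform forces the arguments into this compact cube.
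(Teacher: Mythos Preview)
Your proof is correct and follows exactly the approach the paper intends: the corollary is stated there as a direct consequence of Theorem~\ref{thm:consi}, and you have supplied precisely the missing verification that the $V$-statistic form \eqref{eq:vstatrep} satisfies the $N$-uniform continuity condition \eqref{eq:unif-cont}, using compactness of $[0,1]^m$ to get uniform continuity of $g$ and then the normalised averaging to keep the bound $N$-free.
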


To test independence the consistency of $\hN d$ is usually not sufficient, instead one requires the convergence in distribution of some related statistic under $H_0.$ In the setting of $\hN d$ being a V-statistic the corresponding test statistic is of the form $N^{\beta}\cdot \hN d$ for some $\beta>0$. Theoretically here again a uniformity of the convergence is required to transfer results from $\hN d (T_{\bm{X}}(\ldots))$ to $\hN d (\hN T(\ldots))$. But, as the following counterexample shows, for the proof of such a convergence it seems necessary to go into the details of the specific statistic. 

\begin{remark}[Counterexample - for scaled V-statistics the distributional limit is in general not preserved when replacing $T_X$ by $\hN T$]
We give an elementary example (without explicit link to dependence measures). Let $X^{(i)}$ be independent copies of a continuous random variable $X$. Due to the continuity we have: $T_X = F_X.$ Define $g(x):= 12 (x-\frac{1}{2})$ then by the strong law of large numbers $\hN V(F_X(X^{(1)}),\ldots,F_X(X^{(N)})) :=\frac{1}{N} \sum_{i=1}^N g(F_X(X^{(i)})) \xrightarrow{a.s.} 0$ since $F_X(X^{(i)})$ are uniformly distributed with mean $\frac{1}{2}$ and variance $\frac{1}{12}$. Moreover, by the central limit theorem 
$\sqrt{N} \cdot \hN V(F_X(X^{(1)}),\ldots,F_X(X^{(N)})) \xrightarrow{d} Z \sim N(0,1)$. But in this case the distance to the V-statistic with $F_X$ replaced by $\hN F$ does not vanish, in fact:
\begin{equation*}
\sqrt{N}\cdot (\hN V(\hN F(X^{(1)}),\ldots,\hN F(X^{(N)})) - \hN V(F_X(X^{(1)}),\ldots,F_X(X^{(N)}))) \xrightarrow{d} Z' \nequiv 0
\end{equation*}
where $\hN F(.)$ denotes the empirical distribution function of $X^{(1)},\ldots,X^{(N)}.$ The above convergence is a consequence of the fact that by the central limit theorem the term $\sqrt{N} (\hN F(x) - F_X(x))$ converges in distribution to $Z'' \sim N(0,\Prob(X\leq x)\Prob(X>x)).$

This shows that in general the limits of $\sqrt{N} \hN V(F_X(\ldots))$ and $\sqrt{N} \hN V(\hN F(\ldots))$ differ.
\end{remark}

We aim to use the explicitly known limit behaviour of the underlying estimators without the requirement to reprove these. In this case the limit distribution of the estimator only depends on the copula and on the uniform marginals. This is (besides the margin free quantification of dependence) a further benefit for practical applications of the presented approach, since it can yield computationally faster test procedures.

\begin{remark}[Speed advantage of methods based on the distributional transform] \label{rem:speed}
The knowledge that the limit margins are uniformly distributed (by Theorem \ref{thm:dist-trans}), allows to precompute required quantities, which otherwise (without the distributional transform) would require additional information prior to the test.

A basic example is a Monte Carlo p-value derivation: 
Suppose one knows (as it will be the case in our setting) that the distribution of the limit of the test statistic does not depend on the marginals, and one wants to perform many tests in the setting of $n$ univariate marginals with a sample size of $N$. Then one can easily obtain an approximate distribution function of the given estimator under $H_0$ (i.e., for independent marginals) based on a large number of Monte Carlo samples (with each sample being of size $N$) of $n$ independent uniformly distributed random variables. This distribution function can then be reused in every test. The quality of this approximation turns out to be good in our setting (see Figure \ref{fig:MCU-vs-MC}). Formally this means (in our case) that the distribution of the {\bf approximate Monte Carlo $\bm{H_0}$ samples} $\sqrt{N}\cdot \hN d(\vu^{H_0,(1)},\ldots,\vu^{H_0,(N)} )$ is close to the distribution of {\bf exact Monte Carlo $\bm{H_0}$ samples} $\sqrt{N} \cdot \hN d(\hN T(\vx^{H_0,(1)},\ldots, \vx^{H_0,(N)},\vsu))$, where $\vu^{H_0,(k)}$ are vectors with samples of independent uniformly distributed random variables and $\vx^{H_0,(k)}$ are samples of $(X_1,\ldots,X_n)$ with independent components.

Note, that in general this method is not applicable if the variables under consideration are multivariate, since in this case the marginal distributions under $H_0$ are multivariate. Each margin then still consists of univariate uniformly distributed components, but these components can be dependent.
\end{remark}
\enlargethispage{2\baselineskip}

This section presented a general approach for the construction of dependence measures based on the copula. As stated before, via the $u_i^{(k)}$ some further randomness is introduced and this might or might not blur a given dependence, see the examples in Section \ref{sec:ex}.

\section{Copula distance multivariance}\label{sec:copm}
Distance multivariance is defined by (cf.\ \cite{Boet2020})
\begin{equation}
M(X_1,\ldots,X_n):= \sqrt{\int \left|\E\left(\prod^{n}_{i=1} (\ee^{\ii X_i\cdot t_i} - f_{X_i}(t_i))\right)\right|^2\,\rho(dt)},
\end{equation}
where $X_i$ are $\R^{d_i}$ valued random variables with characteristic functions $f_{X_i}(t_i):= \E(\ee^{\ii X_i \cdot t_i})$ and $\rho = \otimes_{i=1}^n \rho_i$ is based on symmetric  measures $\rho_i$ with full support on $\R^{d_i}$ such that $\int 1\land |t_i|^2 \, \rho_i(dt_i) < \infty$. For the measures $\rho_i$ there are many choices (cf.\ \cite[Table 1]{BoetKellSchi2018}) which unify several dependence measures in the case of $n = 2$ (see \cite[Section 3]{Boet2020}) and extend these to a multivariate setting. There is a one-to-one correspondence between each measure $\rho_i$ (on $\R^{d_i}\backslash\{0\}$) and the real valued continuous negative definite function $\psi_i$ given by
\begin{equation}\label{eq:cndf}
\psi_i(x_i):= \int_{\R^{d_i}\backslash\{0\}} 1-\cos(x_i\cdot t_i)\,\rho_i(t_i).
\end{equation}

Based on the discussion in the previous section and using the notation of \eqref{eq:notation-T} we define the copula version of distance multivariance for $X_1,\ldots,X_n$ by
\begin{align}
\notag M_{cop}(X_1,\ldots,X_n) :&= M(T_{X_1}(X_1,U_1),\ldots,T_{X_n}(X_n,U_n)) \\&=  M(T_{X_i}(X_i,U_i),i\in \{1,\ldots,n\})
\end{align}
and analogously the copula version of total distance multivariance $\overline{M}$ is given by
\begin{equation}
\begin{split}
\overline{M_{cop}}(X_1,\ldots,X_n) &:= \overline{M}(T_{X_1}(X_1,U_1),\ldots,T_{X_n}(X_n,U_n)) \\
&:= \sqrt{ \sum_{\substack{I\subset \{1,\ldots,n\}\\ |I|>1}} M^2(T_{X_i}(X_i,U_i),i\in I) }.
\end{split}
\end{equation}
In \cite{Boet2020} further measures based on $M$ are discussed, e.g.\ the normalized multivariance $\Mskript$ and the $m$-multivariances $M_m$ and $\Mskript_m$, also for these the corresponding copula versions can be defined analogously to the above.

The key observation for the new measures is that these inherit the following properties.

\begin{theorem}[Characterization of independence] \label{thm:Mcop-indep}
\begin{equation}
\overline{M}_{cop}(X_1,\ldots,X_n) = 0 \text{ if and only if $X_1,\ldots,X_n$ are independent.}
\end{equation}
In particular, for random variables $X_1,\ldots, X_n$ which are $(n-1)$-independent
\begin{equation}
M_{cop}(X_1,\ldots,X_n) = 0 \text{ if and only if $X_1,\ldots,X_n$ are independent.}
\end{equation}
\end{theorem}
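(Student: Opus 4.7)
The plan is to derive both equivalences directly from the two ingredients the preceding sections supply: the characterization of independence for distance multivariance itself, established in \cite{Boet2020}, and the fact (Corollary \ref{cor:indep}) that the distributional transform preserves (in)dependence structure. No new integral computations are required; the whole argument is essentially a composition of these results with the definitions of $M_{cop}$ and $\overline{M_{cop}}$.

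For the first claim, I start from the defining identity
\begin{equation*}
\overline{M}_{cop}(X_1,\ldots,X_n) = \overline{M}(T_{X_1}(X_1,U_1),\ldots,T_{X_n}(X_n,U_n))
\end{equation*}
and apply the characterization of independence for $\overline{M}$ from \cite{Boet2020} to the transformed vectors $T_{X_i}(X_i,U_i)$. Hence $\overline{M}_{cop}(X_1,\ldots,X_n) = 0$ is equivalent to mutual independence of $T_{X_1}(X_1,U_1),\ldots,T_{X_n}(X_n,U_n)$, and by Corollary \ref{cor:indep} this is in turn equivalent to mutual independence of $X_1,\ldots,X_n$.

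For the second claim I proceed analogously with $M$ in place of $\overline{M}$, invoking the result from \cite{Boet2020} that $M = 0$ characterizes independence within the class of $(n-1)$-independent families. The extra step is to verify that $(n-1)$-independence of $(X_1,\ldots,X_n)$ transfers to $(n-1)$-independence of $(T_{X_1}(X_1,U_1),\ldots,T_{X_n}(X_n,U_n))$ and back. This follows by applying Corollary \ref{cor:indep} to each subfamily of size $n-1$; here the componentwise construction of the transform with mutually independent $U_i$'s, themselves independent of the $X_i$'s, is crucial, so that restricting to any subset of indices still gives a valid instance of the transform.

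The only point that warrants a brief check — and the most likely source of friction — is the assertion that the characterization results of \cite{Boet2020} genuinely apply when the arguments are the bounded random vectors $T_{X_i}(X_i,U_i)$ in $[0,1]^{d_i}$ rather than the original $X_i$. This reduces to observing that the hypotheses of those results are conditions on the measures $\rho_i$ alone (symmetry, full support on $\R^{d_i}$, and $\int 1\land |t_i|^2\,\rho_i(dt_i)<\infty$) and impose no restriction on the law of the input; hence no further assumption on the marginals is required and the argument goes through verbatim.
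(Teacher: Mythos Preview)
Your proof is correct and follows essentially the same approach as the paper: both apply Corollary~\ref{cor:indep} (and its extension to subfamilies) to transfer (in)dependence between $X_1,\ldots,X_n$ and their distributional transforms, and then invoke the characterization results for $\overline{M}$ and $M$ from \cite{Boet2020}. Your version is somewhat more explicit about why $(n-1)$-independence transfers and why the hypotheses of the cited results are met, but the underlying argument is identical.
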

\begin{proof}
By Corollary \ref{cor:indep} the random variables $X_1,\ldots,X_n$ are independent if and only if $T_{X_1}(X_1,U_1),\ldots,T_{X_n}(X_n,U_n)$ are independent and the same equivalence holds for any subfamily. Thus the results are a direct consequence of the corresponding properties of distance multivariance \cite[Theorem 2.1]{Boet2020}. 
\end{proof}

For samples $\vx^{(k)} = (x_1^{(k)},\ldots,x_n^{(k)}), k=1,\ldots,N$ the sample version of distance multivariance is given by
\begin{equation}
\hN M (\vx^{(1)},\ldots,\vx^{(N)}) := M( \hat X_i,\ldots,\hat X_n)
\end{equation}
where each random variable $\hat X_i$ is distributed according to the empirical distribution of $x_i^{(1)},\ldots,x_i^{(N)}.$ Sample distance multivariance has also an alternative representation given in \eqref{eq:Msample} and it can be turned into a numerical efficient estimator using distance matrices, for details we refer to \cite{BoetKellSchi2019,Boet2020}.

We define (using the notation of Section \ref{sec:disttransmulti}) 
\begin{equation} \label{eq:NMcop}
\hN M_{cop}(\vx^{(1)},\ldots,\vx^{(N)},\vsu) := \hN M(\hN T(\vsx,\vsu)).
\end{equation}

The following result provides everything required for corresponding independence tests based on asymptotics. The technical proof is postponed to Section \ref{sec:proofs}. 

\begin{theorem}[Asymptotics of $\hN M_{cop}$] \label{thm:Mcop}
$\hN M_{cop}$ and $N \cdot \hN M_{cop}^2$ inherit the distributional properties of $\hN M$ and $N \cdot \hN M^2$, respectively. 

In particular this yields for all random variables $\X',\X^{(k)}, k\in \N$ which are independent copies of $\X=(X_1,\ldots,X_n)$ (without any moment assumptions):
\begin{align}
\label{eq:Mcop-consistent}\hN M_{cop}(\X^{(1)},\ldots,\X^{(N)},\vsU) &\xrightarrow{a.s.}  M_{cop}(X_1,\ldots,X_n),\\
\label{eq:Mcop-divergence}N\cdot\hN M_{cop}^2(\X^{(1)},\ldots,\X^{(N)},\vsU) &\xrightarrow{a.s.} \infty \quad \text{ if $X_1,\ldots,X_n$ are dependent,}\\
\label{eq:Mcop-convergence}N\cdot\hN M_{cop}^2(\X^{(1)},\ldots,\X^{(N)},\vsU) &\xrightarrow{d} \mathbb{G} \quad \quad \text{ if $X_1,\ldots,X_n$ are independent,}
\end{align}
where $\mathbb{G}$ is a Gaussian quadratic form with expectation given by $\E(\mathbb{G}) = \prod_{i=1}^n \E(\psi_i( T_{X_i}(X_i,U_i)-T_{X_i}(X'_i,U_i')  )).$ 
\end{theorem}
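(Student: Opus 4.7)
The plan is to introduce the theoretic intermediate $T_{\X}(\vsX, \vsU) := (T_{\X}(\X^{(1)}, \U^{(1)}), \ldots, T_{\X}(\X^{(N)}, \U^{(N)}))$ between $(\X^{(k)})$ and $\hN T(\vsX, \vsU)$, and to handle $\hN M_{cop}(\vsX, \vsU) = \hN M(\hN T(\vsX, \vsU))$ in two stages: first establish all asymptotics for the theoretic version $\hN M(T_{\X}(\vsX, \vsU))$, then transfer them to $\hN M(\hN T(\vsX, \vsU))$. For the first stage, note that $T_{\X}(\X^{(k)}, \U^{(k)})$, $k \geq 1$, are iid copies of $T_{\X}(\X, \U)$, a random vector in the bounded set $[0,1]^{d_1+\ldots+d_n}$ with uniform marginals and copula equal to that of $\X$. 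In particular every moment condition required in \cite{Boet2020} is satisfied trivially, and applying the asymptotic theory of sample distance multivariance to this iid sequence yields $\hN M(T_{\X}(\vsX, \vsU)) \xrightarrow{a.s.} M(T_{\X}(\X, \U)) = M_{cop}(X_1, \ldots, X_n)$, divergence of $N\cdot \hN M^2(T_{\X}(\vsX, \vsU))$ under dependence of the $T_{X_i}(X_i,U_i)$ (equivalently, by Corollary \ref{cor:indep}, of the $X_i$), and convergence in distribution of $N\cdot \hN M^2(T_{\X}(\vsX, \vsU))$ under independence to the Gaussian quadratic form $\mathbb{G}$ with $\E(\mathbb{G}) = \prod_{i=1}^n \E[\psi_i(T_{X_i}(X_i, U_i) - T_{X_i}(X'_i, U'_i))]$.

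The transfer from $T_{\X}$ to $\hN T$ splits by scaling. Statement \eqref{eq:Mcop-consistent} follows from Theorem \ref{thm:consi}, provided $\hN M$ is uniformly continuous on $[0,1]^{(d_1+\ldots+d_n)N}$. This is immediate from the $V$-statistic (distance-matrix) representation of $\hN M^2$ with kernels $\psi_i(y-z)$ together with uniform continuity of each $\psi_i$ on the compact set of increments $[-1,1]^{d_i}$. Given \eqref{eq:Mcop-consistent}, statement \eqref{eq:Mcop-divergence} is immediate: under dependence Theorem \ref{thm:Mcop-indep} gives $M_{cop}>0$, so $N\cdot\hN M_{cop}^2 \xrightarrow{a.s.} \infty$.

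Statement \eqref{eq:Mcop-convergence} is the genuinely hard part, since the counterexample following Theorem \ref{thm:consi} shows that uniform continuity is insufficient at the $N$-scaled level. Here one has to prove
\begin{equation*}
\big|N\cdot\hN M^2(\hN T(\vsX,\vsU)) - N\cdot\hN M^2(T_{\X}(\vsX,\vsU))\big| \xrightarrow{\Prob} 0
\end{equation*}
and then conclude \eqref{eq:Mcop-convergence} via Slutsky from the first stage. To obtain this bound I would use the integral representation
\begin{equation*}
\hN M^2(\vsx) = \int \bigg| \frac{1}{N}\sum_{k=1}^N \prod_{i=1}^n \big(\ee^{\ii x_i^{(k)}\cdot t_i} - \hN f_{x_i}(t_i)\big) \bigg|^2 \rho(dt), \quad \hN f_{x_i}(t_i):=\frac{1}{N}\sum_{\ell=1}^N \ee^{\ii x_i^{(\ell)}\cdot t_i},
\end{equation*}
and expand the difference of the two integrands by swapping one factor at a time. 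Each swap produces a factor of size $O_{\Prob}(N^{-1/2})$ uniformly in $t_i$ on bounded sets (by Theorem \ref{thm:et-funda} combined with $|\ee^{\ii y\cdot t}-\ee^{\ii z\cdot t}|\leq|y-z|\,|t|$), while the remaining factors, being empirically centered in the sample index, supply an additional $N^{-1/2}$ through degenerate $V$-statistic cancellations. Truncating the $t$-integration to a growing ball handles the unbounded frequencies, with the tail controlled by $\int(1\wedge|t_i|^2)\,\rho_i(dt_i)<\infty$. The main obstacle is extracting precisely this extra $N^{-1/2}$ from the double-centering structure: naively combining the uniform bound $\sup |\hN T - T_{\X}| = O_{\Prob}(N^{-1/2})$ with Lipschitz estimates on the kernel produces an $O_{\Prob}(\sqrt N)$ perturbation after $N$-scaling, so the cancellation argument cannot be avoided and must be carried out explicitly using the specific structure of $\hN M^2$.
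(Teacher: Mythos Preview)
Your two-stage plan and the reduction of \eqref{eq:Mcop-convergence} to
\[
N\cdot\hN M^2(\hN T(\vsX,\vsU)) - N\cdot\hN M^2(T_{\X}(\vsX,\vsU)) \xrightarrow{\Prob} 0
\]
followed by Slutsky is exactly what the paper does, and your handling of \eqref{eq:Mcop-consistent} via uniform continuity of $\psi_i$ on $[-1,1]^{d_i}$ plus Theorem~\ref{thm:consi} coincides with the paper's argument.

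Where you diverge is the technique for the scaled difference. The paper does \emph{not} go through the integral representation; it stays with the discrete doubly-centered kernel $\hN\Psi_i(j,k)$ in \eqref{eq:NPsi}, writes the scaled difference as $\frac{1}{N}\sum_{j,k}\big(\prod_i\hN\Psi_i\hN T(j,k)-\prod_i\hN\Psi_i T_{X_i}(j,k)\big)$, and shows its \emph{second moment} tends to zero. The point is that the second moment of $N\cdot\hN M^2$ has been worked out combinatorially in \cite{BersBoet2018v2}; the same combinatorics apply here, and after dropping terms with vanishing coefficients one is left with finitely many products of expectations of the type $\E[\hN\Psi_i\hN T(1,2)\cdot\hN\Psi_i T_{X_i}(1,2)]$. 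Each such expectation converges (everything lives in $[0,1]$, so dominated convergence applies) to the \emph{same} limit as its counterpart with both entries $T_{X_i}$, and the surviving terms then cancel exactly. This buys a short proof that piggybacks on already-tabulated moment calculations.

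Your integral-representation route, by contrast, has a gap at the step you yourself flag. Write $A(\vt)=\frac{1}{N}\sum_k\prod_i(e^{\ii y_i^{(k)}\cdot t_i}-\hN f_{y_i}(t_i))$; swapping one coordinate from $\hN T$ to $T_{\X}$ and using $|e^{\ii y\cdot t}-e^{\ii z\cdot t}|\le|t|\,|y-z|$ together with $\sup|\hN T-T_{\X}|=O_\Prob(N^{-1/2})$ gives $A_{\hN T}-A_{T_{\X}}=O_\Prob(|\vt|N^{-1/2})$ pointwise in $\vt$. Combined with $A_{T_{\X}}=O_\Prob(N^{-1/2})$ under $H_0$ this yields, for the cross term in $N(|A_{\hN T}|^2-|A_{T_{\X}}|^2)$, only $O_\Prob(|\vt|)$, which is not $o_\Prob(1)$; and after integration against $\rho$ you face $\int|\vt|\,\rho(d\vt)$, which need not be finite under the standing assumption $\int 1\wedge|t_i|^2\,\rho_i(dt_i)<\infty$. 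Your appeal to ``degenerate $V$-statistic cancellations'' for the remaining centered factors does not actually produce an extra $N^{-1/2}$: those factors are bounded by $2$ and their empirical centering has already been used in the $O_\Prob(N^{-1/2})$ bound for $A_{T_{\X}}$. To make your route work you would need to exploit the specific empirical-process structure of $\hN T-T_{\X}$ (not just its uniform size), and your truncation-in-$\vt$ plan is insufficient as stated. The paper's second-moment approach avoids all of this by never separating the two factors of $|A|^2$ and by leveraging the pre-computed combinatorics.
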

Analogous results to \eqref{eq:Mcop-consistent}-\eqref{eq:Mcop-convergence} hold also for the normalized multivariance and $m$-multivariances.

Technically, when using $\hN M_{cop}$, the methods in \cite{Boet2020} are only preceded by the empirical distributional transform. Therefore we omit here a further description of the tests and refer to the extended expositions in \cite{Boet2020} and \cite{BersBoet2018v2}. 

\begin{remark}[Speed advantage - using precalculated parameters]\label{rem:pearson}
Due to the known uniform marginals direct p-value estimates are possible in the case of univariate marginals using the methods described in \cite[Example 5.6]{BersBoet2018v2}. The required values are: limit mean 1/3, limit variance 2/45 and limit skewness 8/945, moreover (not given in \cite{BersBoet2018v2}) the parameters required for the finite sample estimates \cite[Theorems 4.15, 4.17]{BersBoet2018v2} become $b=1/6$, $c=7/60$ and $d=1/9.$  These known values provide a considerable speed gain in comparison to the moment estimation methods of \cite{BersBoet2018v2}, see Figure \ref{fig:H0-speed}. 
For multivariate marginals these parameters cannot be precomputed in general, since the required values depend on the (typically) unknown dependence of the components within the marginals (cf.\ Remark \ref{rem:speed}).
\end{remark}

\section{Copula dHSIC} \label{sec:dhsic}

With the notation of the previous section (see also \cite[Secion 3.3]{Boet2020}) the multivariate Hilbert-Schmidt-Independence-Criterion of \cite{PfisBuehSchoPete2017} for real valued random vectors is given by
\begin{align}
\notag dHSIC(X_1,\ldots,X_n) = &\E\left[ \prod_{i=1}^n (1-\psi_i(X_i-X_i')) \right]- 2 \E\left[ \prod_{i=1}^n \E(1-\psi_i(X_i-X_i')|X_i)\right]\\
& + \prod_{i=1}^n \E(1-\psi_i(X_i-X_i')),
\end{align}
where $(X_1',\dots,X_n')$ is an independent copy of $(X_1,\dots,X_n)$ and each $\psi_i$ is bounded with $\psi_i(0)=1$. Note that this implies (since $\psi_i$ itself is a continuous negative definite function, see \eqref{eq:cndf}), that
\begin{equation}
k_i(x_i,x_i'):=\overline{\psi_i}(x_i-x_i'):= 1-\psi_i(x_i-x_i')
\end{equation}
is a positive definite kernel, cf.\ \cite[Section 3.2]{Boet2020}. 
Then the copula version of $dHSIC$ is given by
\begin{equation}
\begin{split}
dHSIC_{cop}(X_1,\ldots,X_n) :&= dHSIC(T_{X_1}(X_1,U_1),\ldots,T_{X_n}(X_n,U_n)) \\&=  dHSIC(T_{X_i}(X_i,U_i),i\in \{1,\ldots,n\})
\end{split}
\end{equation}
and analogous to Theorem \ref{thm:Mcop-indep} (using \cite[Propostion 1]{PfisBuehSchoPete2017}) the measure $dHSIC_{cop}$ characterizes independence.
\begin{theorem}[Characterization of independence]
\begin{equation}
dHSIC_{cop}(X_1,\ldots,X_n) = 0  \text{ if and only if $X_1,\ldots,X_n$ are independent.}
\end{equation}
\end{theorem}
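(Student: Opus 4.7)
The plan is to mimic the two-step reduction already used for $M_{cop}$ in Theorem \ref{thm:Mcop-indep}. First I would invoke Corollary \ref{cor:indep}: the distributional transform preserves (in)dependence, so $X_1,\ldots,X_n$ are independent if and only if the transformed variables $T_{X_1}(X_1,U_1),\ldots,T_{X_n}(X_n,U_n)$ are independent. This handles the ``copula'' part of the statement and reduces the problem to the known independence-characterisation of ordinary $dHSIC$.

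Second, I would apply \cite[Proposition 1]{PfisBuehSchoPete2017} directly to the transformed variables: under the standing assumptions on $\psi_i$ (bounded, $\psi_i(0)=1$, with $k_i(x,x')=1-\psi_i(x-x')$ a characteristic positive definite kernel — which is exactly what is needed for $dHSIC$ to characterise joint independence), one has
\begin{equation*}
dHSIC(T_{X_1}(X_1,U_1),\ldots,T_{X_n}(X_n,U_n)) = 0 \ \Leftrightarrow \ T_{X_1}(X_1,U_1),\ldots,T_{X_n}(X_n,U_n) \text{ independent.}
\end{equation*}
By the definition of $dHSIC_{cop}$ the left-hand side equals $dHSIC_{cop}(X_1,\ldots,X_n)$, and combining the two equivalences gives the claim.

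\paragraph*{Where the (mild) work sits.} The only point that merits a sentence rather than a citation is the verification that the hypotheses of \cite[Proposition 1]{PfisBuehSchoPete2017} do apply to the transformed variables. Since $T_{X_i}(X_i,U_i)$ takes values in $[0,1]^{d_i} \subseteq \R^{d_i}$ and the kernels $k_i$ are assumed to be characteristic on $\R^{d_i}$, the required assumptions (bounded, continuous, characteristic kernels on the space where the random variables live) are inherited verbatim. No moment conditions enter, matching the ``no moment assumptions'' spirit emphasised earlier in the paper. Thus the main obstacle is really just bookkeeping, and the proof should fit in two lines, essentially: ``apply Corollary \ref{cor:indep} and then \cite[Proposition 1]{PfisBuehSchoPete2017}.''
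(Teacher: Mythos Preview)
Your proposal is correct and follows exactly the approach the paper takes: the paper simply states that the result is ``analogous to Theorem \ref{thm:Mcop-indep} (using \cite[Proposition 1]{PfisBuehSchoPete2017})'', i.e., combine Corollary \ref{cor:indep} with the independence characterisation of $dHSIC$. Your additional remark on verifying the hypotheses of \cite[Proposition 1]{PfisBuehSchoPete2017} for the transformed variables is a welcome clarification but not something the paper spells out.
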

An empirical estimator for $dHSIC$ is given by
\begin{align}
\notag \hN dHSIC(\vx^{(1)},\ldots,\vx^{(n)}) := &\frac{1}{N^2} \sum_{j,k=1}^N \prod_{i=1}^n \overline{\psi}_i(x_i^{(j)}-x_i^{(k)})
+ \prod_{i=1}^n \left(\frac{1}{N^2} \sum_{j,k=1}^N \overline{\psi}_i(x_i^{(j)}-x_i^{(k)}) \right)\\
\label{eq:def-Ndhsic}& - 2 \frac{1}{N} \sum_{j=1}^N \prod_{i=1}^n \left( \frac{1}{N} \sum_{k=1}^N \overline{\psi}_i(x_i^{(j)}-x_i^{(k)}) \right).
\end{align}
Note that this might look slightly different to the estimator defined in \cite[Definition 4]{PfisBuehSchoPete2017}, but an expansion of the products and a relabelling of the indices yields their representation, see \eqref{eq:Ndhsic-expanded}. Using \eqref{eq:def-Ndhsic} the estimator can be defined for all $N,$ whereas $N\geq 2n$ was required in \cite{PfisBuehSchoPete2017}.

Analogous to \eqref{eq:NMcop} the estimator for $dHSIC_{cop}$ is defined (using the notation of Section \ref{sec:disttransmulti}) by
\begin{equation} \label{eq:Ndhsiccop}
\hN dHSIC_{cop}(\vx^{(1)},\ldots,\vx^{(N)},\vsu) := \hN dHSIC(\hN T(\vsx,\vsu)). 
\end{equation}
For this estimator the following theorem holds, details of the proof are in Section \ref{app:dhsic}.

\begin{theorem}[Asymptotics of $\hN dHSIC_{cop}$] \label{thm:dhsiccop}
$\hN dHSIC_{cop}$ and $N \cdot \hN dHSIC_{cop}$ inherit the distributional properties of $\hN dHSIC$ and $N \cdot \hN dHSIC$, respectively. 
\end{theorem}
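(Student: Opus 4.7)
My plan is to parallel the two-stage strategy behind Theorem~\ref{thm:Mcop}: first I would transfer consistency of the unscaled statistic via Theorem~\ref{thm:consi}, and then establish that the distributional limit is preserved after multiplying by $N$. For the unscaled statistic, note that the three summands of \eqref{eq:def-Ndhsic} are $V$-statistics whose kernels are finite products of the bounded continuous functions $\overline{\psi}_i(x_i-y_i)$, and hence are uniformly continuous on $[0,1]^{d_i}\times[0,1]^{d_i}$. The hypothesis \eqref{eq:unif-cont} of Theorem~\ref{thm:consi} is therefore satisfied by $\hN dHSIC$, giving that $\hN dHSIC(\hN T(\vsX,\vsU))$ and $\hN dHSIC(T_{\X}(\vsX,\vsU))$ converge to the same almost-sure limit. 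Since $T_{\X}(\X^{(k)},\U^{(k)})$ are i.i.d.\ copies of $T_{\X}(\X,\U)$, the latter equals $\hN dHSIC$ applied to i.i.d.\ samples of the copula distribution, so the classical consistency of $\hN dHSIC$ \cite{PfisBuehSchoPete2017} identifies the limit as $dHSIC_{cop}(X_1,\ldots,X_n)$.

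For the $N$-scaled statistic I would show
\begin{equation}\label{eq:delta-plan}
N\cdot\bigl(\hN dHSIC(\hN T(\vsX,\vsU))-\hN dHSIC(T_{\X}(\vsX,\vsU))\bigr)\xrightarrow{\Prob}0,
\end{equation}
so that the known limit distribution of $N\cdot\hN dHSIC$ applied to the i.i.d.\ copula samples $T_{\X}(\X^{(k)},\U^{(k)})$ transfers to $N\cdot\hN dHSIC_{cop}$ by Slutsky. To control \eqref{eq:delta-plan} I would expand the three summands of \eqref{eq:def-Ndhsic}, apply the telescoping identity $\prod_i A_i-\prod_i B_i=\sum_i(\prod_{\ell<i}B_\ell)(A_i-B_i)(\prod_{\ell>i}A_\ell)$ inside each, and use the uniform DKW bound $\sup|\hN T-T_{X_i}|=O_{\Prob}(N^{-1/2})$ together with the modulus of continuity of $\overline{\psi}_i$ to handle the factorwise differences. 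Under the alternative hypothesis \eqref{eq:delta-plan} is immediate, since both versions are consistent to the same strictly positive limit.

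The main obstacle is that the crude Lipschitz bound alone yields $N\cdot O_{\Prob}(N^{-1/2})=O_{\Prob}(\sqrt N)$, which diverges; under $H_0$ one therefore has to show that the leading linear-in-perturbation contribution to \eqref{eq:delta-plan} is centred and of smaller order after summation. This centring is built into \eqref{eq:def-Ndhsic}: the three summands are arranged so that the constant and first-order Hoeffding components of $\hN dHSIC$ cancel (this is precisely what forces $N\cdot\hN dHSIC$ to converge to a degenerate Gaussian quadratic form under $H_0$ rather than diverging), and the same combinatorial cancellation, combined with the independence of $\U$ from $\X$, forces the first-order perturbation to have vanishing mean. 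Only the second-order contribution (quadratic in $\hN T-T_{X_i}$) survives, of size $O_{\Prob}(N^{-1})$, which after multiplication by $N$ is still $O_{\Prob}(N^{-1/2})$. Carrying out the bookkeeping of these cancellations under the joint randomness of $(\X^{(k)},\U^{(k)})$ and of the empirical process $\hN T-T_{\X}$ is the technical heart of Section~\ref{app:dhsic} and proceeds along the same lines as the proof of Theorem~\ref{thm:Mcop} in Section~\ref{sec:proofs}.
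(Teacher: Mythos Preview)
Your treatment of the unscaled statistic is correct and essentially identical to the paper's: uniform continuity of the $\overline{\psi}_i$ on compacts yields \eqref{eq:unif-cont}, and Theorem~\ref{thm:consi} does the rest.

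For the scaled statistic you identify the right target \eqref{eq:delta-plan} and the Slutsky reduction, but your sketch diverges from the paper in a substantive way and contains a gap. You propose a perturbative Taylor expansion in $\hN T-T_{\X}$ and claim the bookkeeping ``proceeds along the same lines as the proof of Theorem~\ref{thm:Mcop}.'' The paper explicitly warns that this is \emph{not} the case: the second-moment decomposition used for multivariance does not carry over to $\hN dHSIC$, because the individual sums analogous to those in \eqref{eq:limitMcop} diverge as $N\to\infty$; only in a joint analysis do the divergent pieces cancel. The paper therefore takes a different route: it adapts the explicit variance formula of \cite[Proposition~5]{PfisBuehSchoPete2017} to the \emph{covariance} of $N\cdot\hN dHSIC(T_{\X}(\ldots))$ and $N\cdot\hN dHSIC_{cop}(\ldots)$ by replacing the quantities $e_0,e_1,e_2$ with mixed versions involving both $T_{X_j}$ and $\hN T$, shows by dominated convergence that these converge to the pure $T_{X_j}$ versions, and concludes that the variance and mean of the difference tend to zero (hence Chebyshev gives \eqref{eq:delta-plan}).

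There is also an arithmetic slip in your heuristic: a term quadratic in an $O_{\Prob}(N^{-1/2})$ perturbation is $O_{\Prob}(N^{-1})$, and after multiplication by $N$ this is $O_{\Prob}(1)$, not $O_{\Prob}(N^{-1/2})$; so as written your second-order bound does not close the argument. A perturbation/empirical-process approach along your lines can in principle be made to work, but it requires exhibiting the same joint cancellation of divergent terms that the paper flags, not merely invoking the multivariance template. If you want to pursue your route, you would need to control the first-order term not just by ``vanishing mean'' but by a genuine variance bound that accounts for the dependence of $\hN T(\X^{(j)},\U^{(j)};\X^{[N]})$ on the full sample, and you would need a sharper estimate on the quadratic remainder.
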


Tests based on $\hN dHSIC$ use either a resampling method, a rough gamma approximation or an eigenvalue method (see in particular \cite[Table 1]{PfisBuehSchoPete2017} and Figure \ref{fig:H0-speed}.

\section{Empirical properties of $M_{cop}$ and $dHSIC_{cop}$} \label{sec:ex}

We will show that the new measures can be more powerful than various other copula based measures. Thereafter we evaluate computation methods for the p-values based on conservative behaviour and speed. Moreover we will also indicate the limitations of the introduced measures by giving an example where the copula versions of the measures perform worse than the original measures. This section is complemented by several figures and tables in the supplement\footnote{pages \pageref{sec:supp} ff.\ of this manuscript}, these consider mostly the same examples with parameter variations and extend detail. 

All simulations are performed on an i7-6500U CPU Laptop using the statistical computing environment R \cite{RCT2019}, in particular with the packages \texttt{copula} \cite{HofeKojaMaecYan2018}, \texttt{dHSIC} \cite{PfisPete2019} and  \texttt{multivariance} \cite{Boett2019R-2.2.0}. 

The tests are performed with significance level 0.05 using 1000 samples. The power is denoted in percent. For multivariance the standard measures $\rho_i$ corresponding to the Euclidean distance $\psi_i(x_i)=|x_i|$ were used, and for dHSIC the kernel defined via  $\overline{\psi_i}(x)= 1- \exp(-|x|^2/(2\delta^2))$ with $\delta = 3$ was used (for different values of $\delta$ see Table \ref{tab:dhsic-variants}). 
Note that both, multivariance and dHSIC, allow many other (possibly parametrized) variants. These would certainly allow to improve the performance for particular examples, but these would also require prior to testing some knowledge of the type of dependence. The purpose of the examples in this section is to show that the copula based measures can be competitive. The task to find optimal measure selection procedures will be part of future research. Moreover, for both measures exist various p-value derivation methods which will be compared in Figure \ref{fig:H0-speed}. If not stated otherwise, we use a reference distribution based on 100000 approximate Monte Carlo $H_0$ samples as described in Remark \ref{rem:speed} to determine the p-values.

For a comparison with other copula based measures we consider a set of examples discussed in \cite{GeneNesRemiMurp2019} which provide (using their numbers) a direct comparison to eight of their dependence measures, including those introduced in \cite{GeneNesRemi2013}. Hereto samples of dependent uniformly distributed random variables are obtained using the following copulas: Clayton copula, Student copula with 1 and 3 degrees of freedom, Normal copula, Frank copula and Gumbel copula. See Figure \ref{fig:copulas} for a visualization of the induced dependencies. Each copula is parametrized such that the pairwise Kendall's tau is equal to 0.1 (simulations using other values for tau can be found in Table \ref{tab:vary-tau}). The dependent samples were transformed to the following types of marginal distributions: Poisson with mean 1 and 20 (P1 and P20), rounded Pareto (RP) with survival function $1/(k+1)^{1/3}$ for $k \in \N_0$ (discrete, with infinite expectation), Cauchy (CA) (continuous, with no expectation), Student with 3 degrees of freedom altered with an atom at 0 of mass 0.05 (SA) (mixture, with infinite variance).

The power comparison in Table \ref{tab:comp} shows that in certain cases the tests based on $\overline{M}_{cop}$ and $dHSIC_{cop}$ are more powerful than those based on measures considered in \cite{GeneNesRemiMurp2019} ('min' and 'max' denote the minimal and maximal power of the tests considered therein (without the measure 'R'); see Table \ref{tab:fullGene} for details). Moreover, in all cases the new measures provide tests which are more powerful than the minimum of the competing measures. $\overline{M}_{cop}$ performs particularly well for the Student copula (with the exception of Poisson marginals), $dHSIC_{cop}$ handles particularly well the cases of normal, Clayton and Frank copula (in all cases except P1 it is at least close to the 'max'). It is interesting to note that this preference differs from the preferences of the measures $S_n$ and $T_n$ which (as stated in the introduction) correspond structurally to dHSIC and multivariance, respectively.  

\begin{table}[h] 
\centering
\footnotesize
\begin{tabular}{llllll}
 copula & type & $\overline{\Mskript}_{cop}$ & $dH_{cop}$ & min & max \\ 
  \hline
normal & CA & 72.9 & \textbf{83.2 } & 67.8 & 82.6 \\ 
   & P1 & 53.1 & 63.1 & 49.4 & \textbf{82.5 } \\ 
   & P20 & 72.6 & \textbf{82.7 } & 66.2 & 82.4 \\ 
   & RP & 71.5 & 81.1 & 66.8 & \textbf{83.6 } \\ 
   & SA & 72.9 & \textbf{83.0 } & 68.1 & 82.5 \\ 
  t1 & CA & \textbf{100 } & 83.1 & 78.6 & 99.2 \\ 
   & P1 & 90.2 & 65.3 & 50.3 & \textbf{93.4 } \\ 
   & P20 & \textbf{100 } & 82.7 & 78.5 & 99.2 \\ 
   & RP & \textbf{100 } & 81.0 & 80.2 & 98.6 \\ 
   & SA & \textbf{100 } & 82.8 & 78.7 & 99.2 \\ 
  t3 & CA & \textbf{96.5 } & 82.6 & 66.7 & 93.4 \\ 
   & P1 & 71.0 & 67.0 & 47.9 & \textbf{81.8 } \\ 
   & P20 & \textbf{96.3 } & 82.3 & 65.7 & 92.9 \\ 
   & RP & \textbf{94.1 } & 81.9 & 64.5 & 91.9 \\ 
   & SA & \textbf{96.6 } & 82.8 & 66.6 & 93.6 \\ 
   \hline
\end{tabular}

\begin{tabular}{llllll}
 copula & type & $\overline{\Mskript}_{cop}$ & $dH_{cop}$ & min & max \\ 
  \hline
clayton & CA & 79.7 & \textbf{85.5 } & 67.9 & 83.3 \\ 
   & P1 & 48.2 & 53.4 & 42.3 & \textbf{69.5 } \\ 
   & P20 & 77.7 & \textbf{84.7 } & 66.8 & 83.9 \\ 
   & RP & 75.2 & \textbf{81.8 } & 65.0 & 81.4 \\ 
   & SA & 79.7 & \textbf{85.3 } & 67.5 & 83.5 \\ 
  frank & CA & 81.8 & 85.7 & 68.6 & \textbf{85.8 } \\ 
   & P1 & 65.3 & 65.6 & 54.1 & \textbf{79.9 } \\ 
   & P20 & 82.3 & 84.9 & 68.2 & \textbf{86.1 } \\ 
   & RP & 80.8 & 84.9 & 67.8 & \textbf{85.7 } \\ 
   & SA & 81.8 & \textbf{85.9 } & 68.4 & 85.7 \\ 
  gumbel & CA & 85.9 & 81.5 & 60.0 & \textbf{88.4 } \\ 
   & P1 & 73.9 & 65.2 & 48.5 & \textbf{84.4 } \\ 
   & P20 & 85.4 & 80.8 & 60.4 & \textbf{88.0 } \\ 
   & RP & 85.8 & 80.6 & 60.4 & \textbf{87.9 } \\ 
   & SA & 85.8 & 81.6 & 59.4 & \textbf{88.3 } \\ 
   \hline
\end{tabular}

\medskip
\caption{Comparison of the power (stated in \%) of independence tests based on  $\Mskript_{cop}$ and $dHSIC_{cop}$ with those given in \cite{GeneNesRemiMurp2019}. Setting $n=5, N=100$. The 'min' and 'max' are values of the tests of \cite[Table 5]{GeneNesRemiMurp2019} (and its extension \cite[Supplement Table S15]{GeneNesRemiMurp2019}) without the test 'R'. The maximum is always printed in bold. See Table \ref{tab:fullGene} for the full table.} 
\label{tab:comp}
\end{table}

If one compares tests based on the new measures with those using their base measures $\overline{\Mskript}$ and $dHSIC$, it also turns out that sometimes the base measures and sometimes the new measures yield more powerful tests (see e.g.\ Table \ref{tab:fullGene}). Moreover, the behaviour also varies with the sample size and the dimension (see Tables \ref{tab:vary-N} and \ref{tab:vary-n}).

Recall that using the empirical distribution of the test statistic with Monte Carlo samples of the $H_0$ distribution (i.e., samples with independent components) provides (almost) exact p-values. Furthermore, note that if the marginals are from arbitrary distributions the corresponding finite sample distribution under $H_0$ does not coincide with the distribution based on uniformly distributed marginals. Nevertheless the latter approximation performs reasonably well for all marginals of Table \ref{tab:comp}, see Figure \ref{fig:MCU-vs-MC}. This method becomes in the setting of multiple tests very efficient, cf.\ Remark \ref{rem:speed}. But if the marginal distributions are multidimensional or if $n$ and/or $N$ are often varied this Monte Carlo approach becomes very slow or inapplicable. Therefore we will now look at other p-value derivation methods. In particular, we illustrate their (non-)conservative behaviour and speed in Figure \ref{fig:H0-speed}. Here we only consider uniform marginals. The corresponding exact Monte Carlo p-values are used as benchmark and plotted against the p-values obtained via the various methods which are available for multivariance and dHSIC. Hence Figure \ref{fig:H0-speed} allows to visually assess the empirical size for various significance levels at once. For multivariance the use of the method described in Remark \ref{rem:pearson} (called 'pearson\_uniform' in the figure) is the fastest and Pearson's approximation is the sharpest. The latter is here almost indistinguishable from the benchmark, and it is in general the recommended method for fast p-value derivations for independence tests based on multivariance \cite{BersBoet2018v2}. For dHSIC the eigenvalue method turns out to be a good choice (in its current implementation it is about 10 times slower than 'pearson\_uniform' and slightly conservative). The gamma approximation, which was included in \cite{PfisBuehSchoPete2017} as a fast unproven alternative, seems in this setting not reliable (it shows very liberal behaviour).

\begin{figure}[H]
\centering
p-value derivation methods for $\overline{\Mskript}_{cop}$\\
\includegraphics[width=0.24\textwidth]{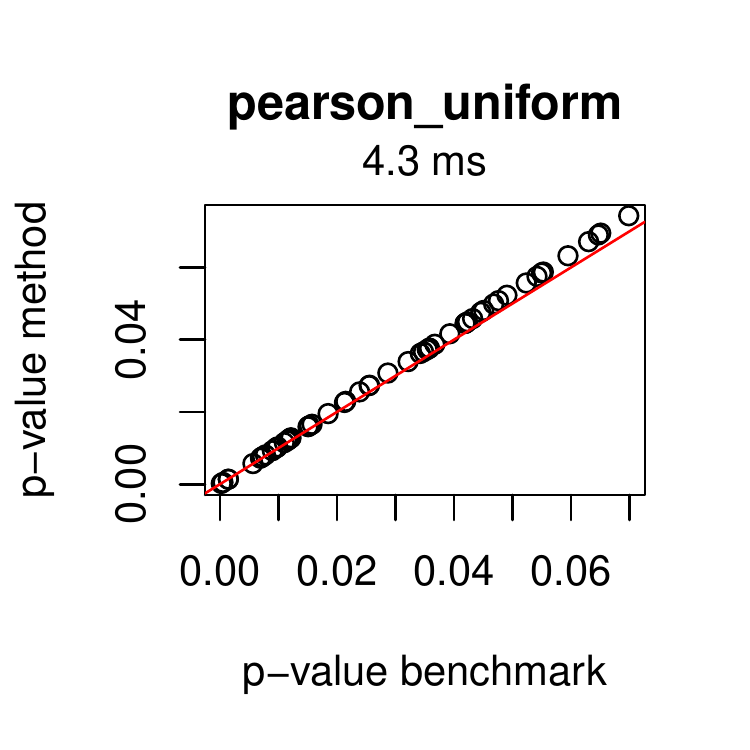} 
\includegraphics[width=0.24\textwidth]{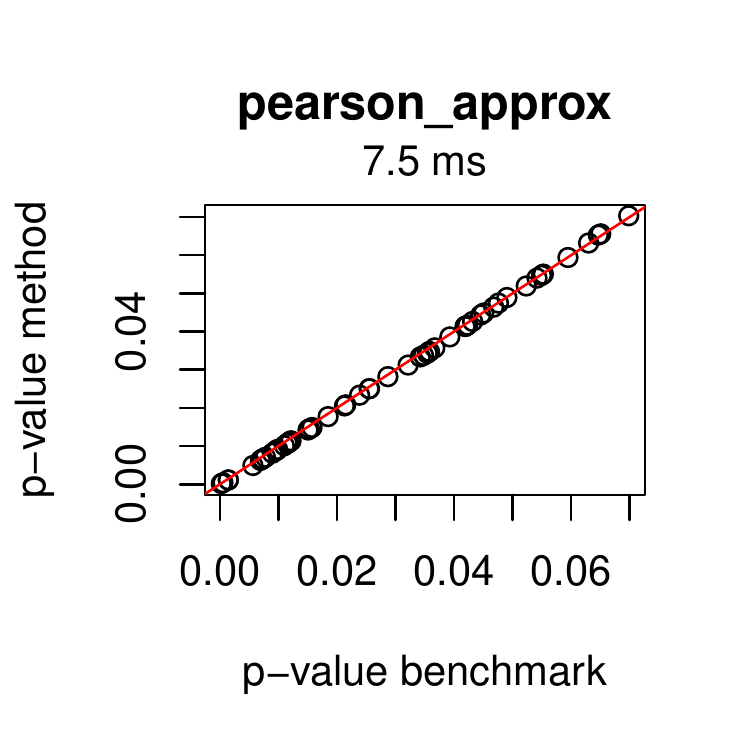} 
\includegraphics[width=0.24\textwidth]{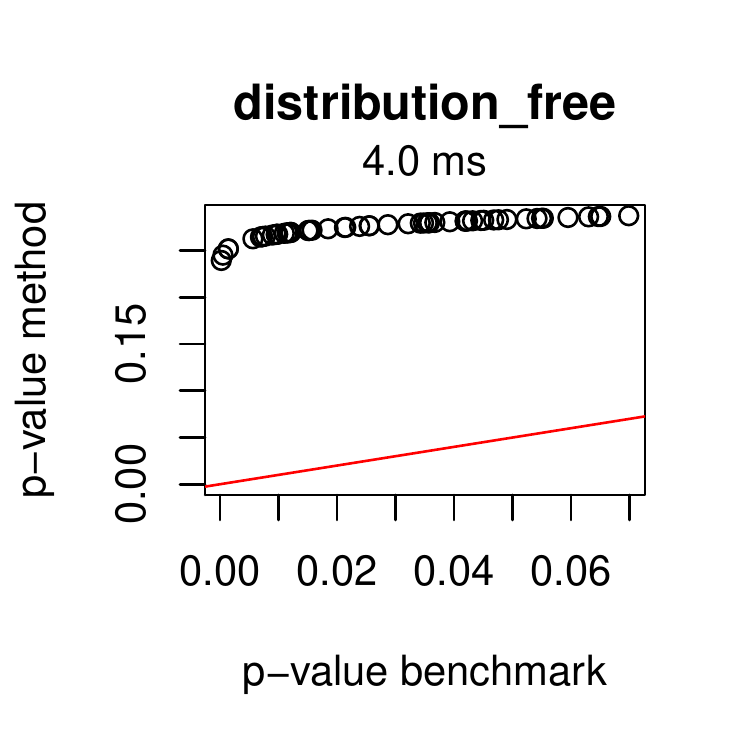} 
\includegraphics[width=0.24\textwidth]{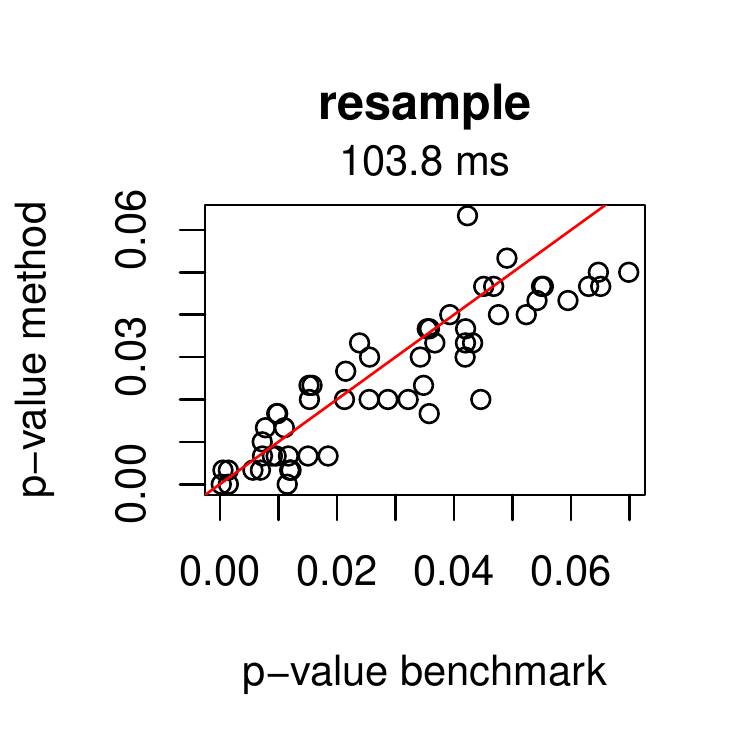} \\
p-value derivation methods for $dHSIC_{cop}$\\
\includegraphics[width=0.24\textwidth]{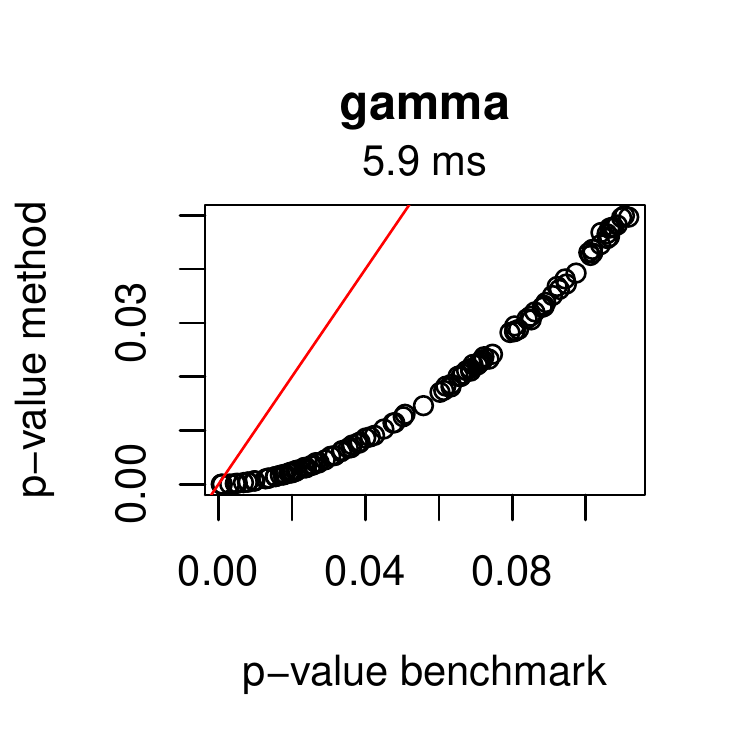} 
\includegraphics[width=0.24\textwidth]{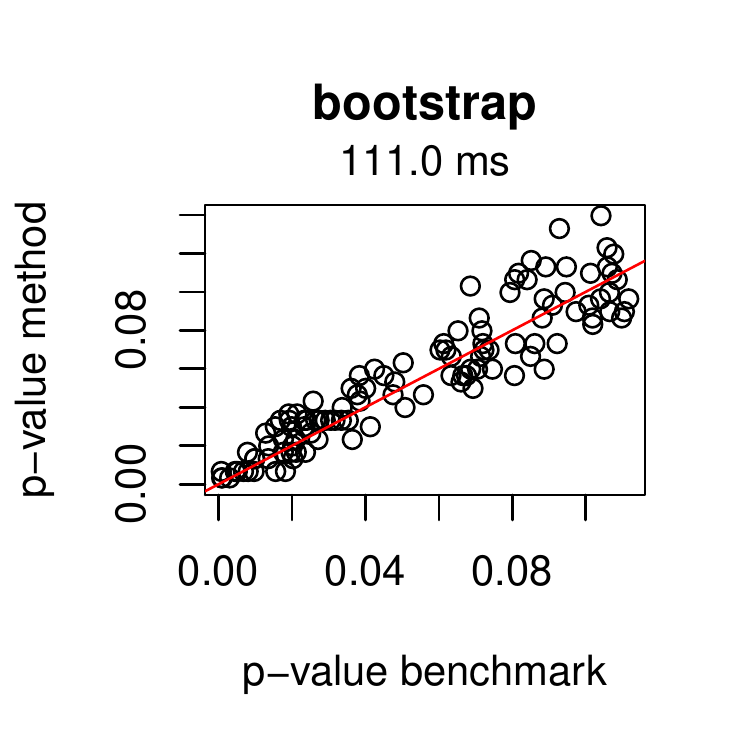} 
\includegraphics[width=0.24\textwidth]{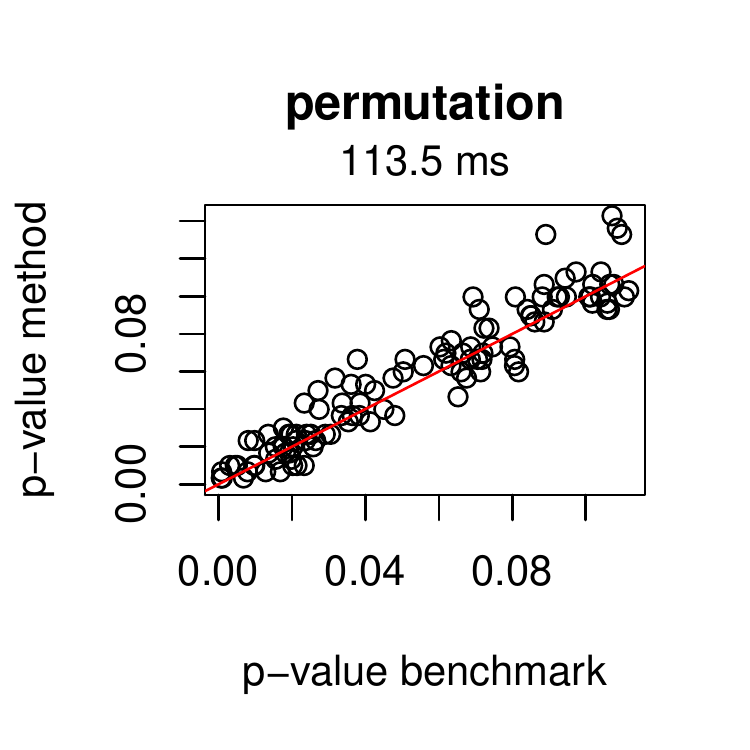} 
\includegraphics[width=0.24\textwidth]{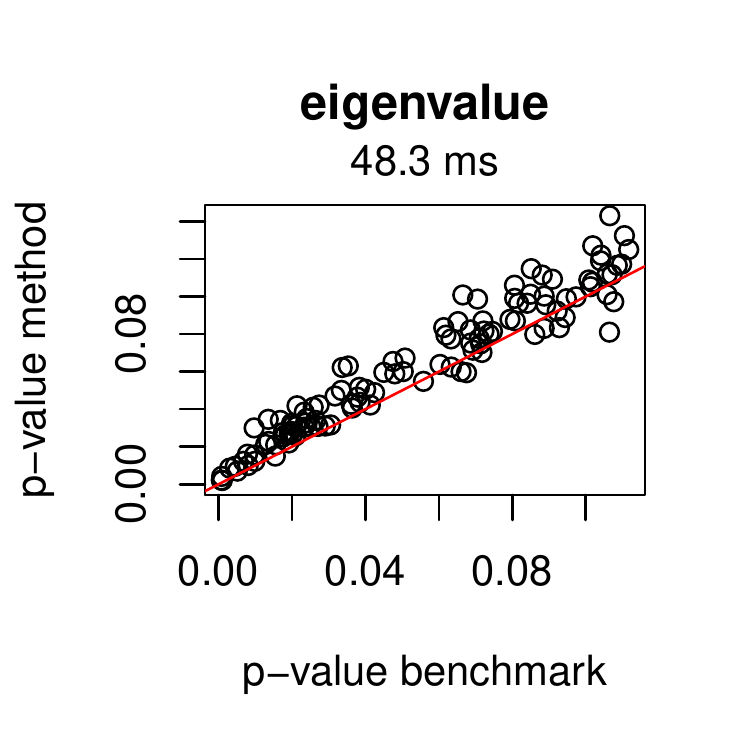} 
\caption{\normal Comparison of p-value derivation methods under $H_0$. Setting: $N=100,n=5$ with uniform marginals. Note that points above the line indicate conservative behaviour, thus in practice one prefers (besides speed considerations) methods which are closest to the line or at least not systematically below. The method 'pearson\_uniform' is based on Remark 3.3, it uses a quadratic form estimate by a Pearson distribution with precalculated moments for uniformly distributed marginals. The other methods are the options provided in the corresponding R packages [23, 24] (for multivariance: 'pearson\_approx' - quadratic form estimate based on estimated moments, 'distribution\_free' - conservative quadratic form estimate, 'resample' - resampling without replacement; for dHSIC: 'gamma' - estimate using a gamma distribution, 'bootstrap' - resampling with replacement, 'permutation' - resampling without replacement, 'eigenvalue' - estimate based on estimated eigenvalues; for the resampling methods  300 resamples were used). The stated median computation time is for a single p-value and each contains about 3.3 ms for the empirical transform. The benchmark p-value median computation time is 4.3 ms (based on 100000 precalculated exact Monte Carlo $H_0$ samples). Observation: In terms of speed and sharpness 'pearson\_approx' can be recommended for $\overline{\Mskript}_{cop}$ and 'eigenvalue' for ${dHSIC}_{cop}$. Moreover, 'pearson\_uniform'  provides for multivariance an additional speed advantage, but this is accompanied by a slightly conservative behaviour.}
\label{fig:H0-speed}
\end{figure}

In the examples of Table \ref{tab:comp} all variables are pairwise dependent. An example of pairwise independent but dependent random variables is constructed as follows (also known as Bernstein's coins, cf.\ \cite[Section 5]{BoetKellSchi2019} and \cite[Example 10.2]{Boet2020}): Let $X_1$ and $X_2$ be independent Bernoulli distributed random variables and set $X_3:= \One_{\{X_1\}}(X_2)$, which models the event that both 'coins' show the same side. Then all three variables are Bernoulli distributed and feature the dependence structure shown in Figure \ref{fig:coins} (see \cite{Boet2020} for further details on the visualization of higher order dependencies). For these random variables the detection power of tests based on $M_{cop}$ and $dHSIC_{cop}$ and of their classical counter parts (i.e., without the distributional transformation) are shown in the central plot of Figure \ref{fig:coins}. Here clearly the copula versions perform worse. If we perturb the $X_i$ by independent normally distributed random variables with variance $1/2$ and mean $0$ the measures $M_{cop}$ and $M$ perform similarly, but for $dHSIC$ a difference is still visible. Note the comparison of  $dHSIC$ and $dHSIC_{cop}$ might be considered unfair, since it depends strongly on a bandwidth parameter which was fixed here (as stated at the beginning of this section; for cases with variable bandwidth see Table \ref{tab:dhsic-variants}). 

\begin{figure}[H]
\centering 
\newlength{\myl}
\setlength{\myl}{10.5\baselineskip}

\includegraphics[height=\myl]{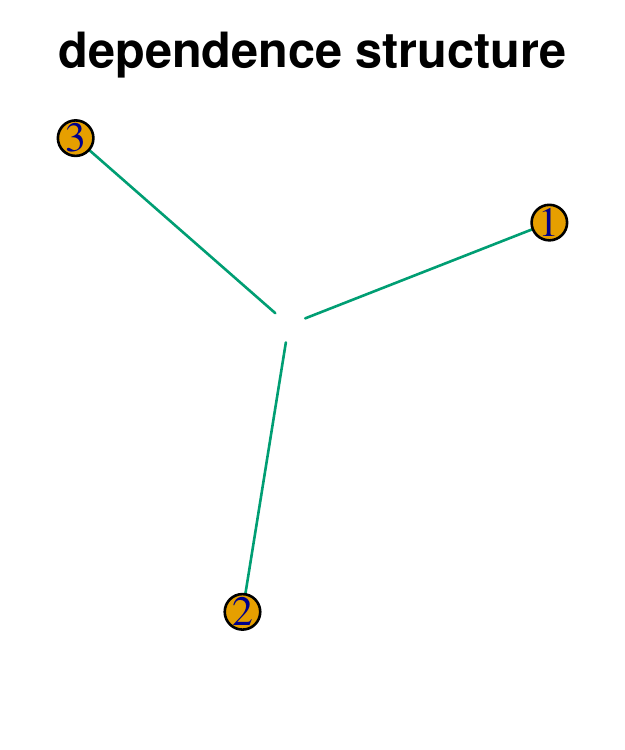} \includegraphics[height=\myl]{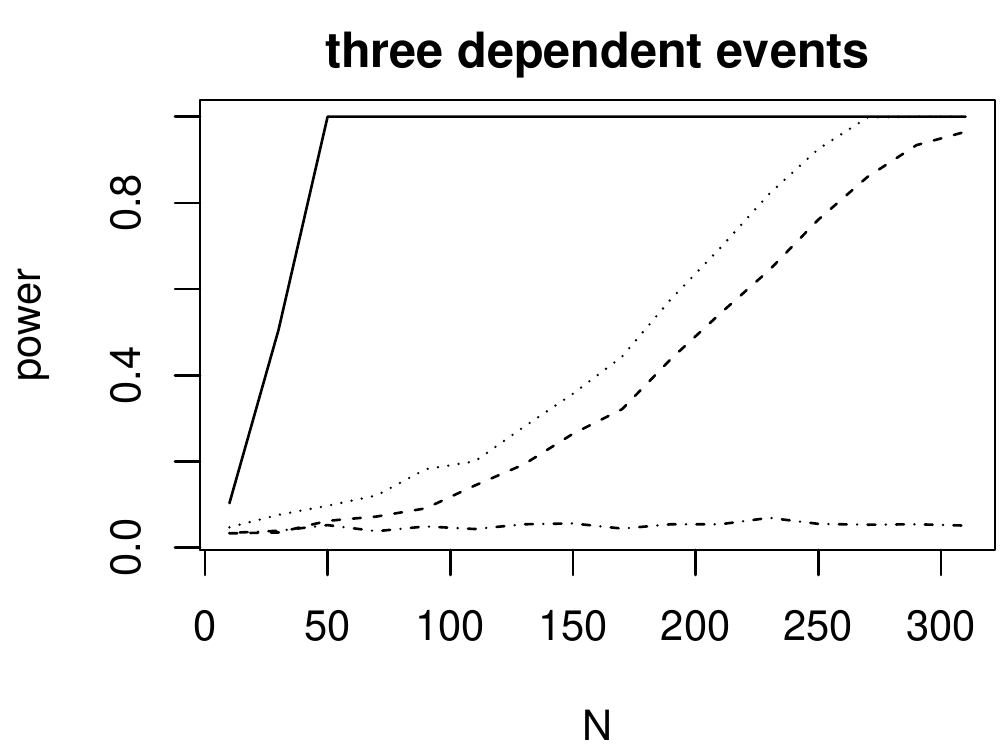} 
\includegraphics[height=\myl]{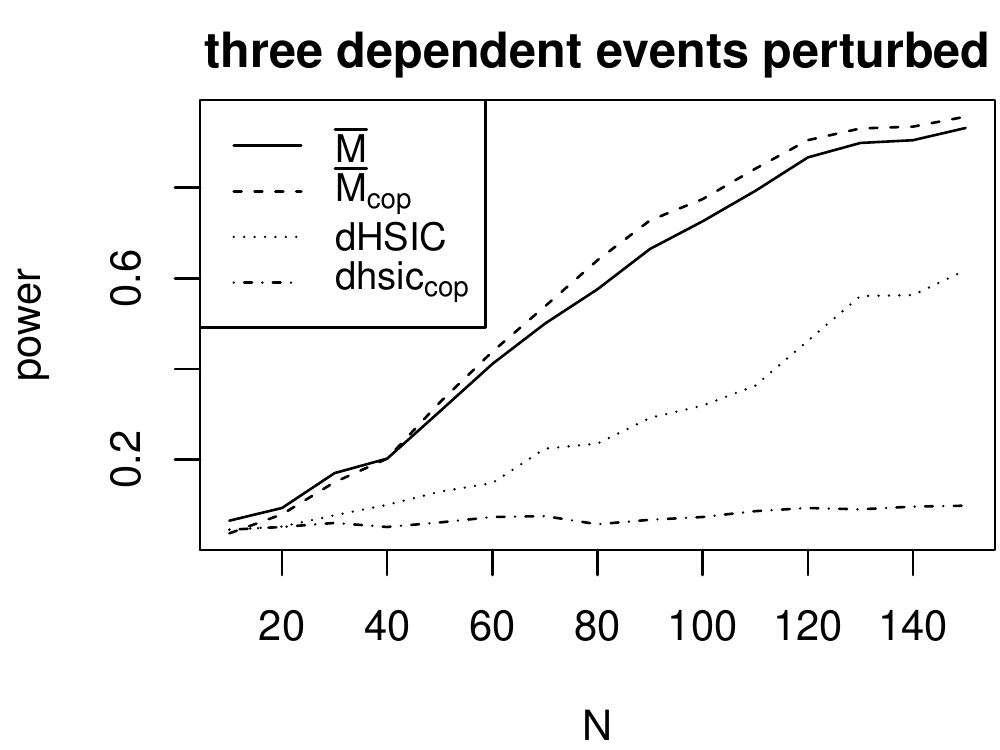} 
\caption{Dependence structure sketch and power of the independence tests for random variables modeling three pairwise independent but dependent events, with and without normal perturbation.}
\label{fig:coins}
\end{figure}

\section{Summary and outlook} \label{sec:summary}
A general scheme to transform dependence measures to measures which only depend on an underlying copula was discussed, 
 with a focus on the distributional properties of the corresponding estimators. The scheme was then explicitly applied to dHSIC and distance multivariance, yielding new measures and corresponding independence tests which are competitive to other copula based measures. 

The current work provides an essential basis for future research in several directions, e.g.:
\begin{itemize}
\item Quantification of dependence, e.g.\ how to interpret the values of the multicorrelations introduced in \cite{Boet2020} for the corresponding copula versions (including a discussion of corresponding dependence measure axioms). This might enable comparisons directly based on the values of the dependence measures. Hence selection procedures, e.g.\ as those suggested in \cite{GoreKrzyWoly2019}, become valid.
\item Optimization and classification of variants of the introduced measures, e.g.\ discussions of the selection of the underlying $\psi_i$. Hereto note that \cite[Section 5.2]{BoetKellSchi2019} provides an example of dependent random variables with uniform marginals where the performance of standard (using $\rho$ corresponding to the Euclidean distances) multivariance improved when the underlying measure $\rho$ was changed (within the framework of distance multivariance).
\item A detailed comparison of dHSIC and distance multivariance. A clear rule of thumb indicating the preference of one of the measures for a given situation is still missing. The alternating optimum in Tables \ref{tab:comp}, \ref{tab:vary-N}, \ref{tab:vary-n} and \ref{tab:dhsic-variants} indicates that (also for the copula versions) a simple answer is not to be expected, see in this context also the discussion in \cite[Section 3.3]{Boet2020}.
\end{itemize}
 
\section{Technical details}
 \label{sec:proofs}
\subsection{Proof of Theorem \ref{thm:Mcop} (Asymptotics of $\hN M_{cop}$)}
We use the notation of \cite[Section 9.3]{Boet2020}:
\begin{equation} \label{eq:Msample}
\hN M^2 (\X^{(1)},\ldots,\X^{(N)}) = \frac{1}{N^2} \sum_{j,k=1}^N \prod_{i=1}^n \hN\Psi_i(j,k)
\end{equation}
with
\begin{align}
\notag \hN\Psi_i(j,k) := 
&- \psi_i(X^{(j)}_i- X^{(k)}_i)  - \frac{1}{N^2} \sum_{l,m=1}^N \psi_i(X^{(l)}_i- X^{(m)}_i)\\
 \label{eq:NPsi}&+ \frac{1}{N} \sum_{l=1}^N  \psi_i(X^{(l)}_i- X^{(k)}_i)+ \frac{1}{N} \sum_{m=1}^N \psi_i(X^{(j)}_i- X^{(m)}_i)
\end{align}
and $\psi_i$ given via \eqref{eq:cndf}.
The sample distance multivariance $\hN M$ is uniformly continuous in the sense of \eqref{eq:unif-cont} since the $\psi_i$ are uniformly continuous on $[-1,1]^{d_i}$. Hence by Theorem \ref{thm:consi} the empirical copula distance multivariance $\hN M_{cop}$ inherits the strong consistency of $\hN M.$

We write $\hN \Psi_i\hN T(j,k)$ if the $X_i^{(.)}$ in \eqref{eq:NPsi} are replaced by $\hN T(X_i^{(.)},U_i^{(.)};X_i^{(1)},\ldots,X_i^{(N)})$ and the notation $\hN \Psi_i T_{X_i}(j,k)$ is used for the case were $X_i^{(.)}$ are replaced by $T_{X_i}(X_i^{(.)},U_i^{(.)})$.

For the scaled version $N \cdot \hN M_{cop}^2$ we will show
\begin{equation}\label{eq:Mcop-Plimit}
N \cdot \frac{1}{N^2} \sum_{j,k=1}^N \left(\prod_{i=1}^n \hN\Psi_i \hN T(j,k) - \prod_{i=1}^n \hN\Psi_i T_{X_i}(j,k)\right) \xrightarrow{\Prob} 0, 
\end{equation}
then by Slutsky's Theorem ($Y_n\xrightarrow{d}Y, Z_n\xrightarrow{\Prob} 0 \ \Rightarrow \ Y_n+Z_n \xrightarrow{d} Y$) the statement of the theorem follows. For \eqref{eq:Mcop-Plimit} note that by the Markov inequality it is sufficient to show that the second moment of the left hand side converges to 0.

The second moment of the finite sample version of distance multivariance has been analysed in detail in \cite{BersBoet2018v2}. It is composed of various terms with the coefficients scrupulously collected in \cite[Table 1]{BersBoet2018v2}, this table also indicates the (overall) behaviour of the terms for $N\to \infty$. Based on this we get the following limit (dropping vanishing terms; using the symmetry $...(j,k) = ...(k,j)$ and the identical distribution of the summands in the particular sums):
\begin{equation} \label{eq:limitMcop} \footnotesize
\begin{split}
&\lim_{N\to\infty} \E \left| N \cdot \frac{1}{N^2} \sum_{j,k=1}^N \left(\prod_{i=1}^n \hN\Psi_i \hN T(j,k) - \prod_{i=1}^n \hN\Psi_i T_{X_i}(j,k)\right)\right|^2\\
& = \lim_{N\to\infty} \frac{1}{N^2} \sum_{\substack{j,k,l,m =1\\|\{j,k\}\cap \{l,m\}| = 2}}^n  \E \left[\left(\prod_{i=1}^n \hN\Psi_i \hN T(j,k) - \prod_{i=1}^n \hN\Psi_i T_{X_i}(j,k)\right)^2\right] \\
&\phantom{=} + \lim_{N\to\infty} \frac{1}{N^2} \sum_{\substack{j,k,l,m =1\\j=k, k\neq l, l=m}}^n \E \left[\left(\prod_{i=1}^n \hN\Psi_i \hN T(j,j) - \prod_{i=1}^n \hN\Psi_i T_{X_i}(j,j)\right)\cdot \left(\prod_{i=1}^n \hN\Psi_i \hN T(l,l) - \prod_{i=1}^n \hN\Psi_i T_{X_i}(l,l)\right)\right]\\
& = \lim_{N\to\infty} \frac{1}{N^2} 2 N (N-1) \left[ \prod_{i=1}^n\E\left[(\hN\Psi_i \hN T(1,2))^2\right] +  \prod_{i=1}^n\E\left[(\hN\Psi_i T_{X_i}(1,2))^2\right] -2  \prod_{i=1}^n\E\left[\hN\Psi_i \hN T(1,2) \cdot \hN\Psi_i T_{X_i}(1,2)\right] \right] \\
&\phantom{=} + \lim_{N\to\infty} \frac{1}{N^2} N (N-1) \Bigg[ \prod_{i=1}^n\E\left[\hN\Psi_i \hN T(1,1) \cdot \hN\Psi_i \hN T(2,2)\right]  + \prod_{i=1}^n\E\left[\hN\Psi_i T_{X_i}(1,1) \cdot \hN\Psi_i T_{X_i}(2,2)\right] \\
&\phantom{= + \lim_{N\to\infty} \frac{1}{N^2} N (N-1) \Bigg[}
- \prod_{i=1}^n\E\left[\hN\Psi_i \hN T(1,1) \cdot \hN\Psi_i T_{X_i}(2,2)\right] - \prod_{i=1}^n\E\left[\hN\Psi_i T_{X_i}(1,1) \cdot \hN\Psi_i \hN T(2,2)\right]\Bigg].
\end{split}
\end{equation}
Finally, consider the derived sum of two limits: in the first limit each expectation converges to $M_{\rho_i\otimes \rho_i}(X_1,X_1)^2$ and in the second limit each expectation converges to $[\E(\psi_i(T_{X_i}(X_i,U_i)-T_{X_i}(X_i',U_i')))]^2$. Therefore both limits become 0.

\subsection{Proof of Theorem \ref{thm:dhsiccop} (Asymptotics of $\hN dHSIC_{cop}$)}\label{app:dhsic}
Expanding the product in the definition of $dHSIC$, \eqref{eq:def-Ndhsic}, yields
\begin{align} 
\notag \hN dHSIC(\vx^{(1)},\ldots,\vx^{(n)}) =& \frac{1}{N^2} \sum_{\substack{j=1\\k=1}}^N \prod_{i=1}^n \overline{\psi}_i(x_i^{(j)}-x_i^{(k)})  + \frac{1}{N^{2n}} \sum_{\substack{j_1,\ldots,j_n=1\\k_1,\ldots,k_n=1}}^N \prod_{i=1}^n \overline{\psi}_i(x_i^{(j_i)}-x_i^{(k_i)})\\
\label{eq:Ndhsic-expanded}&- 2 \frac{1}{N^{n+1}} \sum_{\substack{j=1\\k_1,\ldots,k_n=1}}^N \prod_{i=1}^n \overline{\psi}_i(x_i^{(j)}-x_i^{(k_i)}).
\end{align}

The $\psi_i$ are uniformly continuous on $[-1,1]^{d_i}$ and therefore the whole function is uniformly continuous in the sense of \eqref{eq:unif-cont}. Hence by Theorem \ref{thm:consi} the estimator $\hN dHSIC_{cop}$ inherits the consistency of $\hN dHSIC.$

For the scaled version it is sufficient (cf.\ \eqref{eq:Mcop-Plimit}) to show
\begin{equation} \label{eq:dhsiccop-Plimit}
N\cdot \hN dHSIC(T_{\X}(...)) - N\cdot \hN dHSIC_{cop}(...)\xrightarrow{\Prob} 0. 
\end{equation}
Analogously to \eqref{eq:Mcop-Plimit} this convergence could be shown using the Markov inequality. But the method of proof used for distance multivariance does not transfer directly, since in the case of $\hN dHSIC$ individual sums (similar to those appearing in \eqref{eq:limitMcop}) diverge in the limit. Only in a joint analysis these diverging terms cancel explicitly. The remaining terms converge and cancel in the limit as in \eqref{eq:limitMcop}. We skip the details here, but provide a sketch of a closely related alternative approach (which is also tedious): In \cite{PfisBuehSchoPete2017} the variance of $\hN dHSIC$ was calculated. An analogous formula can also be derived for the covariance of the two estimators in \eqref{eq:dhsiccop-Plimit}. Hereto the $e_.$ in \cite[Proposition 5]{PfisBuehSchoPete2017} have to be replaced by
\begin{align}
e_0^2(j)&:=\E(\overline\psi_j(Y_j-Y_j') \overline\psi_j(Z_j''-Z_j''')),\\
e_1(j)&:=\E(\overline\psi_j(Y_j-Y_j') \overline\psi_j(Z_j-Z_j')),\\
e_2(j)&:=\E(\overline\psi_j(Y_j-Y_j') \overline\psi_j(Z_j'-Z_j''))
\end{align}
where $Y_j^\cdot:=T_{X_j}(X_j^\cdot,U_j^\cdot)$ and $Z_j^\cdot:=\hN T(X_j^\cdot,U_j^\cdot; X^{(1)},\ldots,X^{(N)})$ and variables with different upper indicies denote independent copies. The variance of $\hN dHSIC_{cop}$ is obtained if $Y_j^\cdot:=\hN T(X_j^\cdot,U_j^\cdot; X_j^{(1)},\ldots,X_j^{(N)})$ is used instead. All these expectations are bounded, the $N$ dependent versions converge by dominated convergence to those with $T_{X_i}$ instead of $\hN T.$ Hence the overall limit becomes
\begin{equation}
Var\bigg(N\cdot \hN dHSIC(T_{\X}(...)) - N\cdot \hN dHSIC_{cop}(...)\bigg)\xrightarrow{a.s.} 0.
\end{equation}
Note that also the expectation converges to 0, thus \eqref{eq:dhsiccop-Plimit} follows by Chebychev's inequality.

\newpage
\renewcommand{\thesection}{\Alph{section}}
\setcounter{section}{18}
\counterwithin{figure}{section}
\counterwithin{table}{section}
\section{Supplement}\label{sec:supp}
This supplement complements the discussion of Section \ref{sec:ex}. 
It includes:
\begin{itemize}
\item Figure \ref{fig:copulas}: The visualized dependence corresponding to the copulas.
\item Figure \ref{fig:MCU-vs-MC}: A comparison of the Monte Carlo p-value derivation method of Remark \ref{rem:speed}.
\item Table \ref{tab:fullGene}: The full table corresponding to Table \ref{tab:comp}.
\item Tables \ref{tab:vary-N}, \ref{tab:vary-n} and \ref{tab:vary-tau}: Simulations in the setting of Table \ref{tab:comp} for $(N,n,\tau)$ in $\{ (50,5,0.1),(200,5,0.1),$\\$ (100,2,0.1),(100,10,0.1),(100,5,0.05),(100,5,0.2)\}.$ 
\item Table \ref{tab:dhsic-variants}: A comparison of $dHSIC_{cop}$ with $\delta \in \{0.1,0.2, 0.5,0.75,1,2,3,4,5\}.$
\end{itemize}

New terms in the following tables are: the independence copula (indep) providing $H_0$ samples, uniform marginals (U), Bernoulli marginals (B) and $dHSIC^\delta_{cop}$ for some $\delta >0$ were $\delta$ denotes explicitly the used parameter. In the tables also tests based on the classical, i.e., without the distributional transform, measures $\overline{\Mskript}$ and $dHSIC$ are included. For the p-values of $\overline{\Mskript}$ Pearson's approximation and for $dHSIC$ the eigenvalue method was used. Note that $\overline{\Mskript}$ requires some finite moments for its proven asymptotics, here we perform the tests regardless of the required assumptions.

Main observations:
\begin{itemize}
\item The approximate Monte Carlo $H_0$ samples of Remark \ref{rem:speed} provide good p-value estimates. For multivariance these become conservative for large dimensions and for small sample sizes, in the same settings these become liberal for dHSIC (Figure \ref{fig:MCU-vs-MC}).
\item In the case of Bernoulli marginals all tables indicate that the classical methods are superior (Tables \ref{tab:fullGene}, \ref{tab:vary-N}, \ref{tab:vary-n}).
\item The optimal measure depends strongly on the given marginals and copula (Tables \ref{tab:fullGene}, \ref{tab:vary-N}, \ref{tab:vary-n}, \ref{tab:dhsic-variants}). Moreover, the heuristic measure selection of dHSIC does not perform well for uniform marginals (Table \ref{tab:dhsic-variants}).
\end{itemize}

\begin{figure}[H]
\centering 
\setlength{\myl}{0.31\textwidth}
\includegraphics[width=\myl]{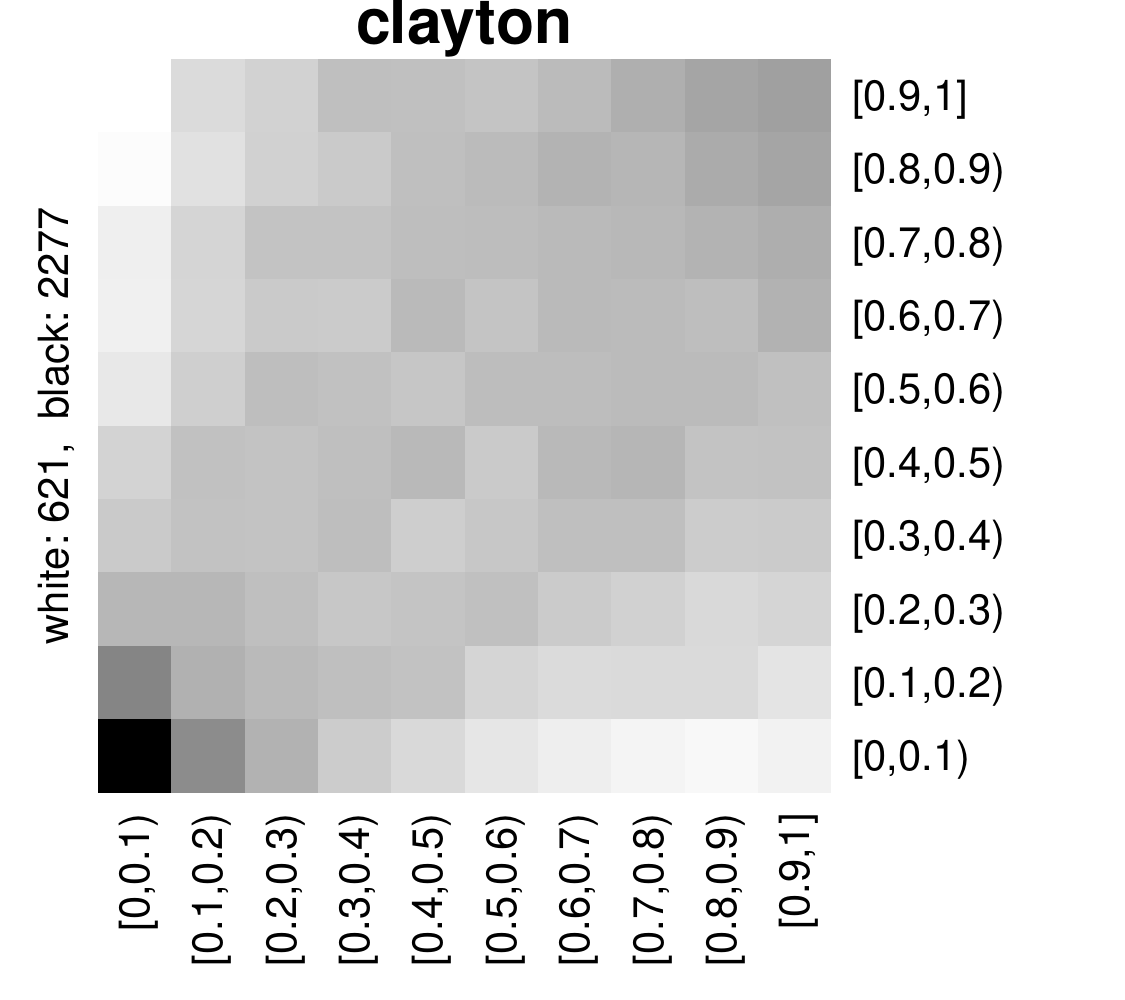} 
\includegraphics[width=\myl]{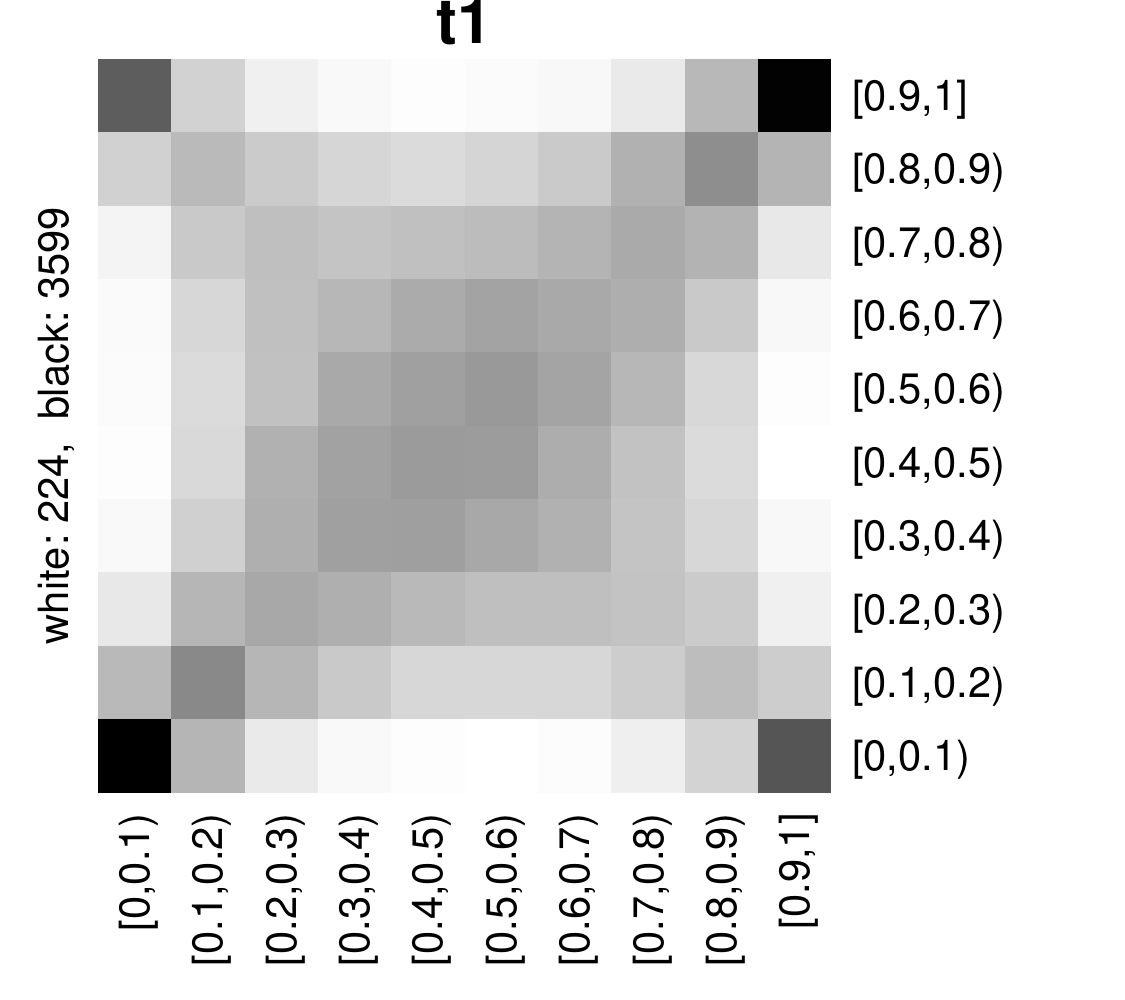} 
\includegraphics[width=\myl]{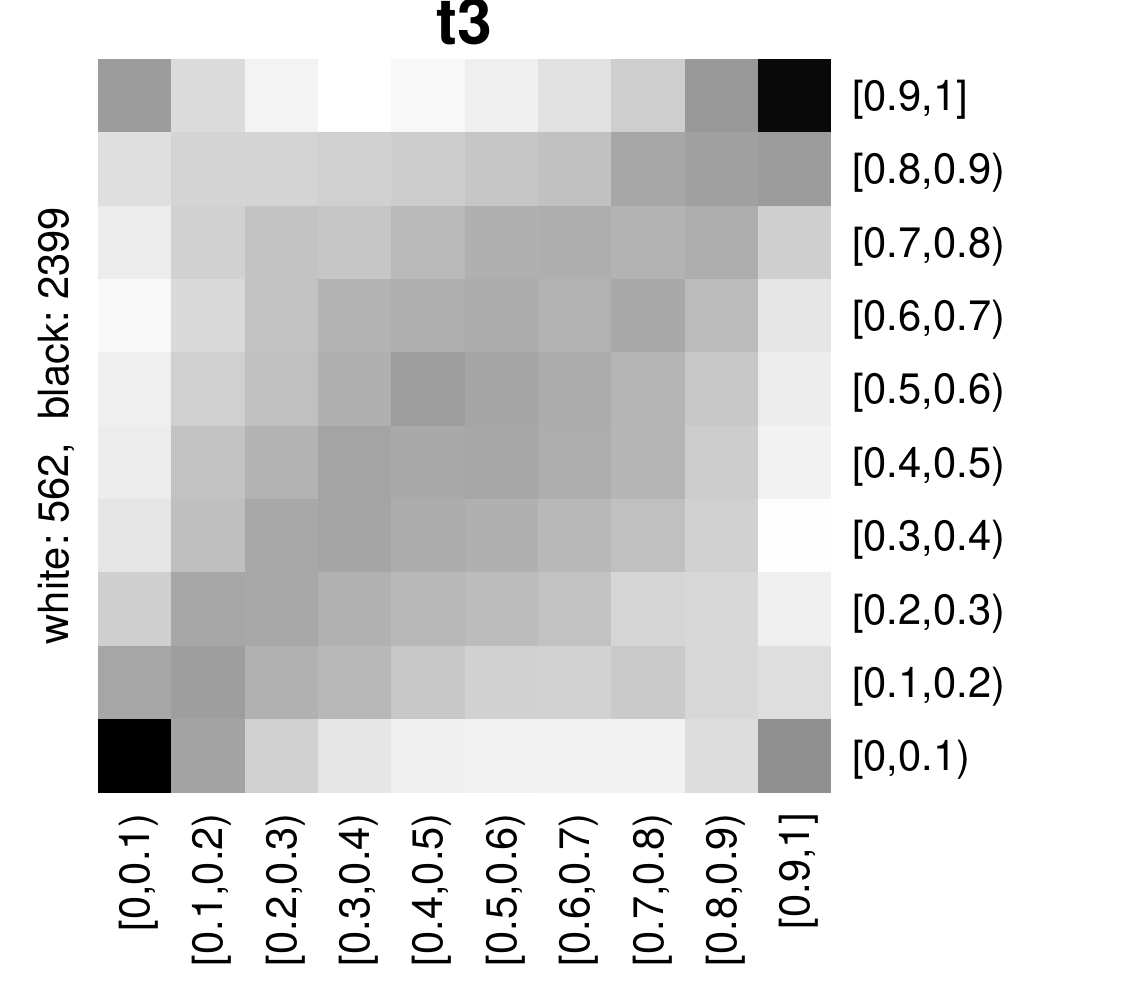} 
\includegraphics[width=\myl]{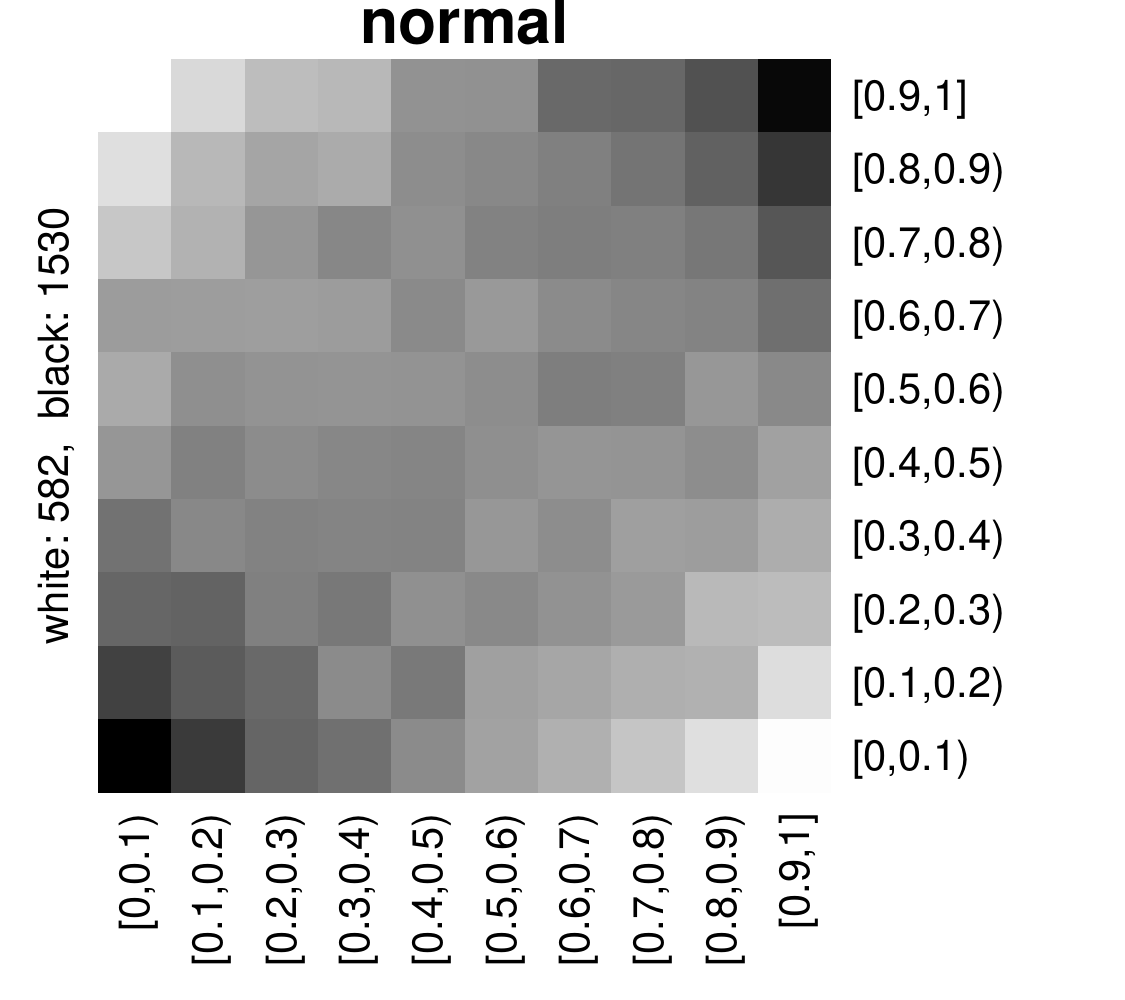} 
\includegraphics[width=\myl]{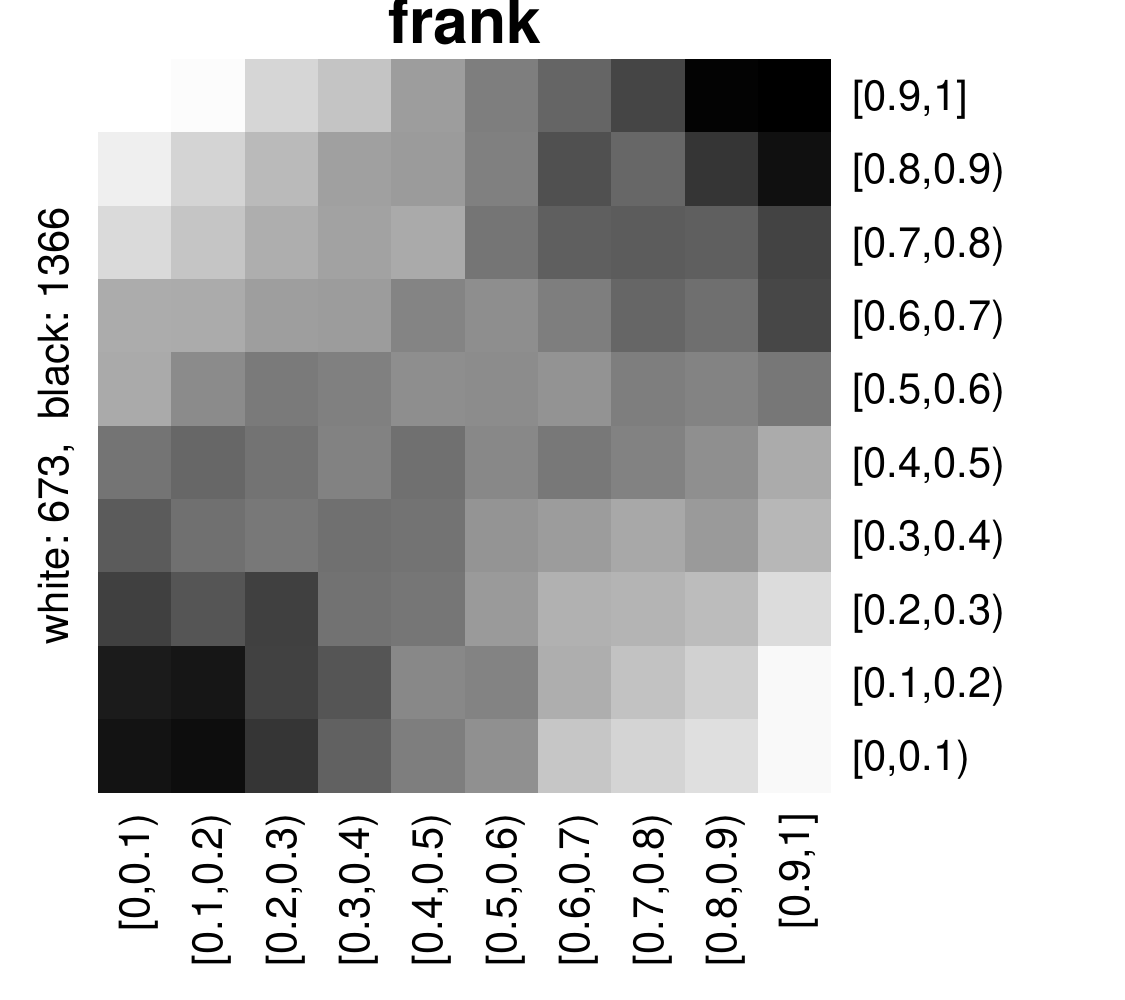} 
\includegraphics[width=\myl]{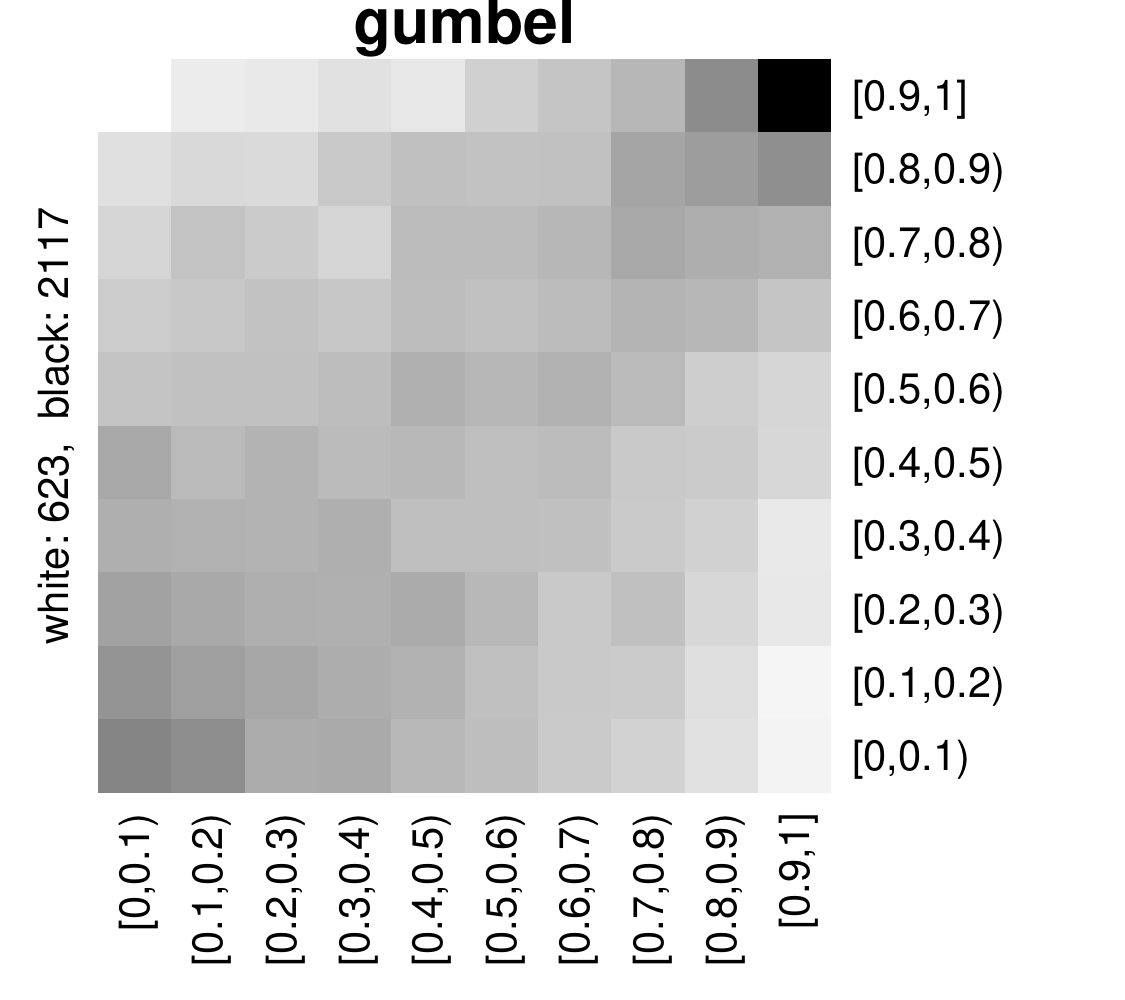} 

\caption{Induced copula dependence. Depicted are the bivariate marginals in the case $N=100, n = 5$ with pairwise Kendall's tau of 0.1, based on 100000 samples and binned into the stated ranges. Each plot has its own colour range from white to black in 255 steps, the bin with the smallest frequency is white and the one with the largest frequency is black, the corresponding frequencies are stated on the left.}
\label{fig:copulas}
\end{figure}

\begin{figure}[H]
\centering
\includegraphics[width=0.8\textwidth]{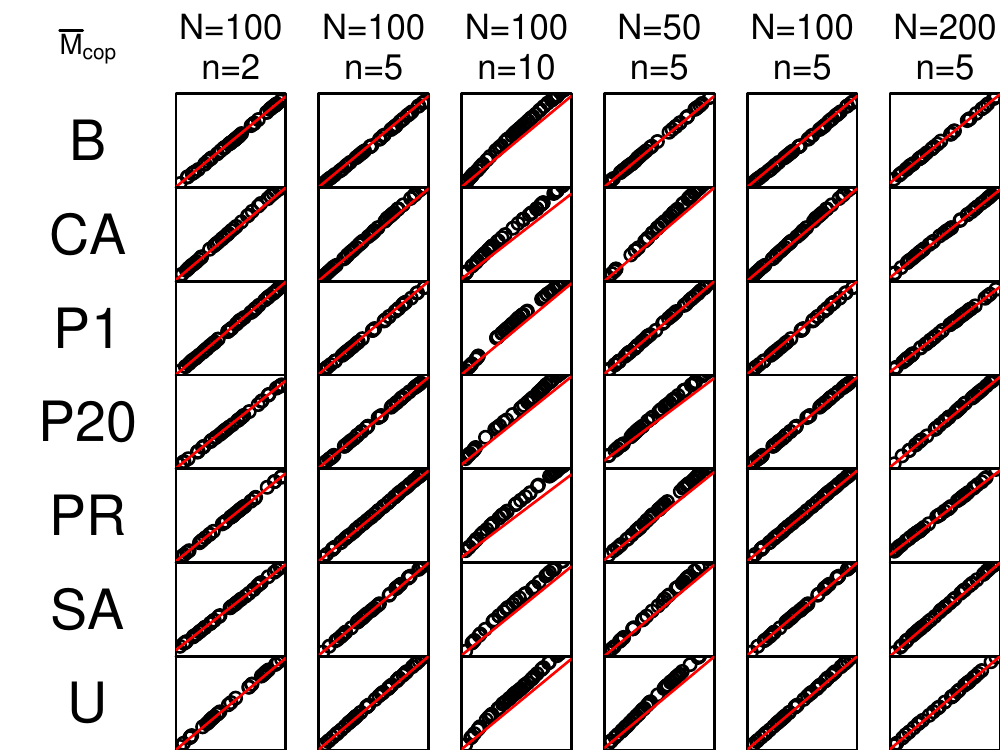} 
\vspace{1\baselineskip}

\includegraphics[width=0.8\textwidth]{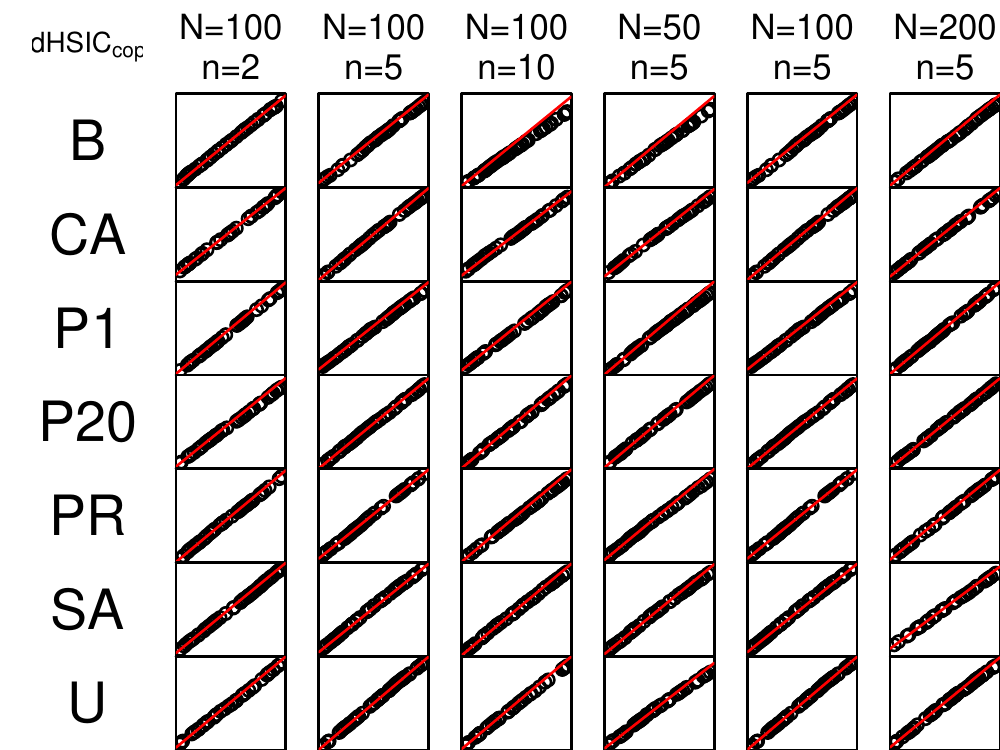} 
\caption{Performance under $H_0$ of the fast p-value generation discussed in Remark \ref{rem:speed}. The plots are similar to those in Figure \ref{fig:H0-speed}. For each graph 1000 samples of $H_0$ data (of the stated marginals, sample size, dimension) were considered and its \textit{exact Monte Carlo} p-values ($x$-axis) were computed using 100000 samples of $H_0$ data, and its fast \textit{approximate Monte Carlo} p-values ($y$-axis) were computed also based on 100000 samples (as described in Remark \ref{rem:speed}). The $x$- and $y$-axes range both from 0 to 0.05. Note that points above the line indicate conservative behaviour. \newline
Observations: Overall the performance of the approximate method is very good. Nevertheless, the method behaves complementary for dHSIC and multivariance: For multivariance the fast Monte Carlo method is conservative for small samples and also for large dimensions. In the same settings it becomes (slightly) liberal for dHSIC.}
\label{fig:MCU-vs-MC}
\end{figure}

\begin{table}[ht]
\centering
\footnotesize
\begin{tabular}{lllllllllllll}
 copula & type & $\overline{\Mskript}$ & $\overline{\Mskript}_{cop}$ & $dHSIC$ & $dHSIC_{cop}$ & Sn & Tn & TnT & JSn & JTn & JTnT & Wn \\ 
  \hline
normal & B & 36.1 & 29.6 & \textbf{50.2 } & 30.7 &  &  &  &  &  &  &  \\ 
   & CA & 4.1 & 72.9 & 3.4 & \textbf{83.2 } & 82.4 & 67.8 & 82.5 & 81.7 & 67.9 & 82.6 & 72.3 \\ 
   & P1 & 71.0 & 53.1 & 63.3 & 63.1 & \textbf{82.5 } & 61.6 & 75.1 & 68.6 & 49.4 & 65.9 & 60.3 \\ 
   & P20 & 74.1 & 72.6 & 1.0 & \textbf{82.7 } & 82.4 & 67.4 & 81.7 & 80.5 & 66.2 & 81.6 & 71.6 \\ 
   & RP & 0.8 & 71.5 & 1.8 & 81.1 & \textbf{83.6 } & 68.5 & 82.5 & 81.7 & 66.8 & 82.1 & 71.7 \\ 
   & SA & 39.6 & 72.9 & 44.7 & \textbf{83.0 } & 82.4 & 68.1 & 82.2 & 81.7 & 68.2 & 82.5 & 72.3 \\ 
   & U & 73.6 & 73.0 & 82.1 & \textbf{83.3 } &  &  &  &  &  &  &  \\ 
  t1 & B & 35.3 & 28.3 & \textbf{49.7 } & 31.2 &  &  &  &  &  &  &  \\ 
   & CA & \textbf{100 } & \textbf{100 } & \textbf{100 } & 83.1 & 78.6 & 82.9 & 97.7 & 79.4 & 83.5 & 97.5 & 99.2 \\ 
   & P1 & \textbf{99.9 } & 90.2 & 74.6 & 65.3 & 82.1 & 62.0 & 80.1 & 68.7 & 50.3 & 69.1 & 93.4 \\ 
   & P20 & \textbf{100 } & \textbf{100 } & \textbf{100 } & 82.7 & 78.6 & 83.0 & 98.0 & 78.5 & 81.9 & 97.4 & 99.2 \\ 
   & RP & 48.8 & \textbf{100 } & 2.6 & 81.0 & 82.9 & 81.0 & 96.9 & 80.9 & 80.2 & 96.3 & 98.6 \\ 
   & SA & \textbf{100 } & \textbf{100 } & 99.8 & 82.8 & 78.7 & 83.0 & 97.9 & 79.4 & 83.8 & 97.5 & 99.2 \\ 
   & U & \textbf{100 } & \textbf{100 } & 54.0 & 82.5 &  &  &  &  &  &  &  \\ 
  t3 & B & 34.3 & 27.7 & \textbf{47.6 } & 28.6 &  &  &  &  &  &  &  \\ 
   & CA & 92.9 & 96.5 & \textbf{99.9 } & 82.6 & 78.3 & 66.7 & 82.2 & 77.8 & 66.8 & 82.9 & 93.4 \\ 
   & P1 & \textbf{97.6 } & 71.0 & 64.5 & 67.0 & 81.7 & 58.8 & 72.9 & 68.3 & 47.9 & 66.1 & 81.8 \\ 
   & P20 & \textbf{99.9 } & 96.3 & 69.6 & 82.3 & 77.6 & 66.2 & 81.7 & 76.8 & 65.7 & 81.4 & 92.9 \\ 
   & RP & 16.2 & \textbf{94.1 } & 1.0 & 81.9 & 80.0 & 66.6 & 81.3 & 78.0 & 64.5 & 81.4 & 91.9 \\ 
   & SA & \textbf{100 } & 96.6 & 94.5 & 82.8 & 77.9 & 67.0 & 82.3 & 77.9 & 66.6 & 82.4 & 93.6 \\ 
   & U & \textbf{96.5 } & \textbf{96.5 } & 69.5 & 82.6 &  &  &  &  &  &  &  \\ 
  clayton & B & 31.6 & 27.8 & \textbf{46.5 } & 29.4 &  &  &  &  &  &  &  \\ 
   & CA & 17.7 & 79.7 & 11.7 & \textbf{85.5 } & 70.4 & 67.9 & 83.3 & 70.3 & 68.2 & 83.3 & 75.2 \\ 
   & P1 & 46.2 & 48.2 & 43.2 & 53.4 & 68.8 & 54.4 & \textbf{69.5 } & 56.5 & 42.3 & 58.5 & 47.1 \\ 
   & P20 & 79.3 & 77.7 & 1.3 & \textbf{84.7 } & 71.5 & 67.9 & 83.6 & 71.2 & 66.8 & 83.9 & 75.3 \\ 
   & RP & 0.0 & 75.2 & 1.8 & \textbf{81.8 } & 72.8 & 67.4 & 81.4 & 69.9 & 65.0 & 81.4 & 71.2 \\ 
   & SA & 67.2 & 79.7 & 59.5 & \textbf{85.3 } & 70.2 & 67.5 & 83.4 & 69.9 & 68.4 & 83.5 & 75.5 \\ 
   & U & 79.3 & 79.5 & 81.9 & \textbf{85.4 } &  &  &  &  &  &  &  \\ 
  frank & B & 53.8 & 39.4 & \textbf{62.6 } & 37.0 &  &  &  &  &  &  &  \\ 
   & CA & 1.5 & 81.8 & 3.8 & 85.7 & 68.8 & 78.9 & 85.5 & 68.6 & 79.0 & \textbf{85.8 } & 80.7 \\ 
   & P1 & 71.8 & 65.3 & 61.7 & 65.6 & 67.3 & 70.7 & \textbf{79.9 } & 54.1 & 56.9 & 70.3 & 70.3 \\ 
   & P20 & 76.2 & 82.3 & 2.4 & 84.9 & 69.3 & 79.0 & \textbf{86.1 } & 68.2 & 77.8 & 85.2 & 80.1 \\ 
   & RP & 0.3 & 80.8 & 0.2 & 84.9 & 70.3 & 78.9 & \textbf{85.7 } & 67.8 & 77.7 & 85.5 & 79.8 \\ 
   & SA & 27.9 & 81.8 & 54.0 & \textbf{85.9 } & 68.8 & 78.8 & 85.7 & 68.4 & 79.2 & 85.4 & 80.3 \\ 
   & U & 82.8 & 81.8 & 83.4 & \textbf{85.7 } &  &  &  &  &  &  &  \\ 
  gumbel & B & 41.2 & 31.2 & \textbf{44.7 } & 27.9 &  &  &  &  &  &  &  \\ 
   & CA & 27.2 & 85.9 & 7.9 & 81.5 & 60.6 & 81.9 & 80.5 & 60.0 & 82.4 & 80.8 & \textbf{88.4 } \\ 
   & P1 & \textbf{88.4 } & 73.9 & 54.4 & 65.2 & 61.9 & 79.2 & 76.0 & 48.5 & 67.6 & 67.1 & 84.4 \\ 
   & P20 & \textbf{89.0 } & 85.4 & 1.3 & 80.8 & 62.4 & 81.6 & 80.3 & 60.4 & 80.9 & 79.1 & 88.0 \\ 
   & RP & 21.1 & 85.8 & 0.2 & 80.6 & 62.6 & 83.0 & 80.2 & 60.4 & 81.5 & 79.7 & \textbf{87.9 } \\ 
   & SA & 72.0 & 85.8 & 38.1 & 81.6 & 60.6 & 82.0 & 80.1 & 59.4 & 82.1 & 80.6 & \textbf{88.3 } \\ 
   & U & \textbf{86.0 } & 85.9 & 77.5 & 81.4 &  &  &  &  &  &  &  \\ 
  indep & B & 4.2 & \textbf{4.8 } & 4.3 & \textbf{4.8 } &  &  &  &  &  &  &  \\ 
   & CA & 1.0 & 5.0 & 1.5 & \textbf{5.3 } &  &  &  &  &  &  &  \\ 
   & P1 & 4.3 & 4.7 & 2.8 & \textbf{5.4 } &  &  &  &  &  &  &  \\ 
   & P20 & \textbf{5.6 } & 5.3 & 0.2 & 5.1 &  &  &  &  &  &  &  \\ 
   & RP & 0.0 & \textbf{5.6 } & 0.0 & 4.7 &  &  &  &  &  &  &  \\ 
   & SA & 3.7 & 4.8 & 1.7 & \textbf{4.9 } &  &  &  &  &  &  &  \\ 
   & U & 5.0 & 4.9 & 4.1 & \textbf{5.1 } &  &  &  &  &  &  &  \\ 
   \hline
\end{tabular}

\caption{Extension of Table \ref{tab:comp} with details of \cite[Table S15]{GeneNesRemiMurp2019}. The maximum in each row is printed in bold. The quoted table contained one more test using a measure called 'R' based on \cite{GeneNesRemi2013}. It was removed here because in general it is non-consistent. Nevertheless, in the given setting its power was only exceeded by multivariance in the case of the Student copulas.\newline
Observations: There is no overall optimal test. The classical measures outperform sometimes the copula based measures, in particular in the case of the Student copula (with 1 and 3 degrees of freedom).} 
\label{tab:fullGene}
\end{table}

\begin{table}[ht]
\centering
\begin{tabular}{llllll}
  & &\multicolumn{4}{c}{$N = 50, n = 5$} \\
copula & type & $\overline{\Mskript}$ & $\overline{\Mskript}_{cop}$ & $dHSIC$ & $dHSIC_{cop}$ \\ 
  \hline
normal & B & 20.1 & 14.0 & \textbf{25.8 } & 15.9 \\ 
   & CA & 5.4 & 42.4 & 0.3 & \textbf{53.8 } \\ 
   & P1 & \textbf{41.6 } & 27.9 & 26.9 & 34.4 \\ 
   & P20 & 39.7 & 41.2 & 0.0 & \textbf{53.2 } \\ 
   & RP & 2.1 & 41.9 & 0.2 & \textbf{52.2 } \\ 
   & SA & 19.7 & 41.4 & 11.3 & \textbf{53.2 } \\ 
   & U & 44.7 & 41.6 & 47.3 & \textbf{53.8 } \\ 
  t1 & B & 19.9 & 15.9 & \textbf{25.4 } & 16.1 \\ 
   & CA & \textbf{100 } & 99.8 & 99.8 & 59.2 \\ 
   & P1 & \textbf{97.8 } & 69.1 & 24.3 & 39.3 \\ 
   & P20 & \textbf{100 } & 99.7 & 94.4 & 57.9 \\ 
   & RP & 44.6 & \textbf{99.2 } & 0.1 & 57.9 \\ 
   & SA & \textbf{100 } & 99.8 & 68.4 & 59.0 \\ 
   & U & 99.6 & \textbf{99.8 } & 22.8 & 58.7 \\ 
  t3 & B & 18.5 & 16.1 & \textbf{27.2 } & 17.8 \\ 
   & CA & \textbf{84.6 } & 79.4 & 82.2 & 57.1 \\ 
   & P1 & \textbf{80.6 } & 44.5 & 23.6 & 38.6 \\ 
   & P20 & \textbf{97.8 } & 79.6 & 6.9 & 57.3 \\ 
   & RP & 18.5 & \textbf{71.9 } & 0.3 & 55.0 \\ 
   & SA & \textbf{99.0 } & 80.2 & 32.9 & 57.1 \\ 
   & U & \textbf{81.5 } & 79.9 & 36.8 & 57.1 \\ 
  clayton & B & 16.6 & 13.5 & \textbf{23.1 } & 16.5 \\ 
   & CA & 17.9 & 46.5 & 2.7 & \textbf{54.6 } \\ 
   & P1 & 23.8 & 21.8 & 15.8 & \textbf{27.7 } \\ 
   & P20 & 50.1 & 45.7 & 0.1 & \textbf{53.4 } \\ 
   & RP & 0.6 & 42.4 & 0.3 & \textbf{51.5 } \\ 
   & SA & 42.3 & 46.3 & 16.2 & \textbf{54.7 } \\ 
   & U & 49.5 & 46.4 & 44.9 & \textbf{54.1 } \\ 
  frank & B & 26.9 & 18.8 & \textbf{29.3 } & 19.4 \\ 
   & CA & 2.3 & 44.6 & 1.3 & \textbf{51.3 } \\ 
   & P1 & \textbf{38.9 } & 31.9 & 23.0 & 35.1 \\ 
   & P20 & 35.4 & 43.5 & 0.2 & \textbf{50.8 } \\ 
   & RP & 0.5 & 43.2 & 0.0 & \textbf{51.1 } \\ 
   & SA & 14.0 & 44.4 & 15.4 & \textbf{50.7 } \\ 
   & U & 47.0 & 44.8 & 47.4 & \textbf{51.4 } \\ 
  gumbel & B & 23.0 & 14.9 & \textbf{23.4 } & 16.1 \\ 
   & CA & 23.9 & \textbf{56.4 } & 1.6 & 51.4 \\ 
   & P1 & \textbf{66.0 } & 44.7 & 19.8 & 38.0 \\ 
   & P20 & \textbf{60.0 } & 56.0 & 0.0 & 51.2 \\ 
   & RP & 19.6 & \textbf{56.1 } & 0.0 & 51.1 \\ 
   & SA & 45.8 & \textbf{56.3 } & 9.6 & 51.0 \\ 
   & U & \textbf{59.6 } & 56.3 & 41.8 & 51.3 \\ 
  indep & B & \textbf{4.8 } & 3.9 & 3.2 & 4.6 \\ 
   & CA & 2.2 & 3.2 & 0.2 & \textbf{4.7 } \\ 
   & P1 & 6.1 & 5.5 & 2.4 & \textbf{6.8 } \\ 
   & P20 & \textbf{5.6 } & 4.2 & 0.1 & 5.4 \\ 
   & RP & 0.2 & 3.4 & 0.0 & \textbf{5.3 } \\ 
   & SA & \textbf{4.9 } & 3.5 & 0.7 & 4.6 \\ 
   & U & 4.5 & 3.1 & 2.7 & \textbf{4.7 } \\ 
   \hline
\end{tabular}

\begin{tabular}{llll}
  \multicolumn{4}{c}{$N = 200, n = 5$} \\
$\overline{\Mskript}$ & $\overline{\Mskript}_{cop}$ & $dHSIC$ & $dHSIC_{cop}$ \\ 
  \hline
69.5 & 59.6 & \textbf{84.4 } & 60.8 \\ 
  2.9 & 97.6 & 12.3 & \textbf{99.3 } \\ 
  95.4 & 88.4 & \textbf{96.2 } & 93.4 \\ 
  98.1 & 97.1 & 15.9 & \textbf{99.0 } \\ 
  0.3 & 97.0 & 14.9 & \textbf{99.0 } \\ 
  80.7 & 97.6 & 92.8 & \textbf{99.3 } \\ 
  97.9 & 97.5 & \textbf{99.3 } & \textbf{99.3 } \\ 
  66.4 & 54.7 & \textbf{82.1 } & 56.1 \\ 
  \textbf{100 } & \textbf{100 } & \textbf{100 } & 96.7 \\ 
  \textbf{100 } & 98.9 & 99.6 & 90.9 \\ 
  \textbf{100 } & \textbf{100 } & \textbf{100 } & 96.4 \\ 
  48.6 & \textbf{100 } & 22.3 & 96.5 \\ 
  \textbf{100 } & \textbf{100 } & \textbf{100 } & 96.8 \\ 
  \textbf{100 } & \textbf{100 } & 87.8 & 96.7 \\ 
  67.9 & 57.3 & \textbf{84.3 } & 58.2 \\ 
  96.7 & 99.9 & \textbf{100 } & 99.0 \\ 
  \textbf{99.9 } & 94.0 & 97.5 & 92.3 \\ 
  \textbf{100 } & \textbf{100 } & 99.9 & 99.1 \\ 
  20.4 & \textbf{99.7 } & 15.2 & 98.8 \\ 
  \textbf{100 } & 99.8 & \textbf{100 } & 99.0 \\ 
  \textbf{100 } & 99.9 & 97.4 & 99.0 \\ 
  64.4 & 53.3 & \textbf{80.1 } & 56.2 \\ 
  19.5 & 97.9 & 42.5 & \textbf{99.4 } \\ 
  83.1 & 83.2 & 82.6 & \textbf{87.7 } \\ 
  98.6 & 97.3 & 22.1 & \textbf{99.4 } \\ 
  0.0 & 96.5 & 18.2 & \textbf{99.1 } \\ 
  94.7 & 97.8 & 97.3 & \textbf{99.4 } \\ 
  97.5 & 97.9 & 99.3 & \textbf{99.4 } \\ 
  85.8 & 72.8 & \textbf{90.9 } & 69.2 \\ 
  0.5 & 98.8 & 16.4 & \textbf{99.3 } \\ 
  \textbf{94.8 } & 92.5 & 93.0 & 93.4 \\ 
  97.7 & 98.5 & 31.5 & \textbf{99.1 } \\ 
  0.1 & 98.4 & 2.7 & \textbf{99.2 } \\ 
  71.0 & 98.7 & 94.4 & \textbf{99.3 } \\ 
  98.9 & 98.9 & \textbf{99.3 } & \textbf{99.3 } \\ 
  79.6 & 60.5 & \textbf{80.5 } & 56.0 \\ 
  32.7 & \textbf{98.9 } & 26.6 & 98.3 \\ 
  \textbf{99.3 } & 95.3 & 93.3 & 93.3 \\ 
  \textbf{99.6 } & 99.0 & 16.3 & 98.4 \\ 
  21.6 & \textbf{98.9 } & 1.7 & 98.3 \\ 
  95.0 & \textbf{98.8 } & 85.8 & 98.5 \\ 
  \textbf{98.9 } & \textbf{98.9 } & 98.0 & 98.3 \\ 
  4.3 & 5.0 & 5.0 & \textbf{5.2 } \\ 
  0.3 & 5.7 & 2.0 & \textbf{6.0 } \\ 
  5.0 & \textbf{5.6 } & 4.9 & 5.2 \\ 
  \textbf{6.2 } & 5.9 & 0.4 & 6.0 \\ 
  0.0 & 5.9 & 0.0 & \textbf{6.5 } \\ 
  5.0 & \textbf{5.7 } & 3.2 & \textbf{5.7 } \\ 
  5.8 & 5.7 & 5.7 & \textbf{6.0 } \\ 
   \hline
\end{tabular}

\medskip
\caption{Setting of Table \ref{tab:comp} with different $N$. For each parameter setting the  maximum in each row is printed in bold. \newline
 Observations: With increasing sample size the power increases and the optimal test changes in certain cases. The tests without the distributional transform can be more powerful than the copula versions. The copula-based tests seem to become somewhat liberal with increasing sample size.} 
\label{tab:vary-N}
\end{table}

\begin{table}[ht]
\centering
\begin{tabular}{llllll}
  & &\multicolumn{4}{c}{$N = 100, n = 2$} \\
copula & type & $\overline{\Mskript}$ & $\overline{\Mskript}_{cop}$ & $dHSIC$ & $dHSIC_{cop}$ \\ 
  \hline
normal & B & 17.3 & 14.1 & \textbf{17.4 } & 13.7 \\ 
   & CA & 7.2 & 28.7 & 5.2 & \textbf{32.9 } \\ 
   & P1 & \textbf{23.7 } & 20.2 & 23.0 & 21.0 \\ 
   & P20 & 29.1 & 29.0 & 12.3 & \textbf{33.3 } \\ 
   & RP & 5.6 & 28.6 & 8.7 & \textbf{33.2 } \\ 
   & SA & 28.8 & 28.6 & 23.2 & \textbf{33.1 } \\ 
   & U & 27.9 & 28.7 & \textbf{32.6 } & \textbf{32.6 } \\ 
  t1 & B & 15.5 & 12.3 & \textbf{15.7 } & 11.1 \\ 
   & CA & \textbf{99.0 } & 40.5 & 96.3 & 28.6 \\ 
   & P1 & \textbf{46.3 } & 24.0 & 19.8 & 22.2 \\ 
   & P20 & 75.7 & 40.6 & \textbf{95.6 } & 27.1 \\ 
   & RP & \textbf{40.3 } & 37.4 & 17.8 & 28.3 \\ 
   & SA & \textbf{97.2 } & 40.6 & 38.6 & 28.3 \\ 
   & U & \textbf{40.7 } & 40.6 & 17.1 & 28.4 \\ 
  t3 & B & 15.3 & 13.0 & \textbf{15.4 } & 11.0 \\ 
   & CA & \textbf{58.3 } & 29.8 & 23.6 & 30.7 \\ 
   & P1 & \textbf{31.8 } & 21.4 & 21.5 & 21.8 \\ 
   & P20 & \textbf{38.2 } & 28.3 & 31.0 & 29.9 \\ 
   & RP & 22.3 & 28.4 & 9.7 & \textbf{30.2 } \\ 
   & SA & \textbf{59.4 } & 29.3 & 20.5 & 30.8 \\ 
   & U & 27.6 & 29.5 & 24.5 & \textbf{30.7 } \\ 
  clayton & B & \textbf{17.7 } & 13.1 & 17.6 & 11.4 \\ 
   & CA & 17.0 & 28.2 & 6.6 & \textbf{31.8 } \\ 
   & P1 & \textbf{20.1 } & 17.6 & 17.6 & 17.3 \\ 
   & P20 & 30.1 & 28.1 & 15.7 & \textbf{31.8 } \\ 
   & RP & 1.7 & 27.5 & 11.6 & \textbf{29.7 } \\ 
   & SA & \textbf{37.5 } & 28.4 & 24.8 & 31.8 \\ 
   & U & 28.2 & 28.1 & 30.8 & \textbf{32.0 } \\ 
  frank & B & 21.8 & 13.6 & \textbf{22.2 } & 12.3 \\ 
   & CA & 5.0 & 32.8 & 6.1 & \textbf{34.7 } \\ 
   & P1 & \textbf{27.6 } & 22.9 & 25.7 & 24.1 \\ 
   & P20 & 32.2 & 31.5 & 18.2 & \textbf{33.0 } \\ 
   & RP & 3.2 & 31.8 & 11.4 & \textbf{33.1 } \\ 
   & SA & 28.7 & 32.7 & 25.4 & \textbf{34.8 } \\ 
   & U & 33.3 & 33.1 & 34.2 & \textbf{34.7 } \\ 
  gumbel & B & \textbf{16.0 } & 12.0 & \textbf{16.0 } & 11.4 \\ 
   & CA & 18.0 & 27.8 & 4.8 & \textbf{30.8 } \\ 
   & P1 & \textbf{29.4 } & 20.9 & 26.3 & 22.3 \\ 
   & P20 & \textbf{31.4 } & 27.3 & 14.6 & 29.8 \\ 
   & RP & 14.8 & 27.9 & 6.8 & \textbf{30.4 } \\ 
   & SA & \textbf{35.1 } & 28.2 & 21.1 & 30.8 \\ 
   & U & 27.9 & 27.9 & 29.5 & \textbf{31.2 } \\ 
  indep & B & 5.7 & 5.9 & 5.6 & \textbf{6.2 } \\ 
   & CA & 4.7 & \textbf{4.9 } & 2.6 & 4.8 \\ 
   & P1 & 4.2 & 4.9 & \textbf{5.1 } & 4.5 \\ 
   & P20 & 4.5 & 3.9 & \textbf{5.7 } & 4.3 \\ 
   & RP & 2.2 & 5.2 & 2.1 & \textbf{5.4 } \\ 
   & SA & 4.7 & \textbf{4.8 } & 3.0 & 4.4 \\ 
   & U & 4.4 & \textbf{4.9 } & 4.6 & 4.8 \\ 
   \hline
\end{tabular}

\begin{tabular}{llll}
  \multicolumn{4}{c}{$N = 100, n = 10$} \\
$\overline{\Mskript}$ & $\overline{\Mskript}_{cop}$ & $dHSIC$ & $dHSIC_{cop}$ \\ 
  \hline
37.6 & 8.6 & \textbf{84.7 } & 62.9 \\ 
  1.7 & 59.5 & 0.3 & \textbf{99.8 } \\ 
  80.8 & 29.7 & 94.5 & \textbf{96.6 } \\ 
  50.5 & 59.4 & 0.0 & \textbf{99.7 } \\ 
  0.2 & 54.2 & 0.0 & \textbf{99.8 } \\ 
  28.5 & 59.1 & 49.9 & \textbf{99.8 } \\ 
  64.4 & 59.2 & 99.7 & \textbf{99.8 } \\ 
  38.7 & 7.6 & \textbf{84.7 } & 63.5 \\ 
  \textbf{100 } & \textbf{100 } & \textbf{100 } & 99.9 \\ 
  \textbf{100 } & \textbf{100 } & 99.6 & 95.9 \\ 
  \textbf{100 } & \textbf{100 } & 93.1 & 99.8 \\ 
  51.4 & \textbf{100 } & 0.0 & 99.8 \\ 
  \textbf{100 } & \textbf{100 } & \textbf{100 } & 99.9 \\ 
  \textbf{100 } & \textbf{100 } & 85.9 & 99.9 \\ 
  40.2 & 6.6 & \textbf{84.4 } & 66.3 \\ 
  97.5 & \textbf{100 } & \textbf{100 } & 99.7 \\ 
  \textbf{100 } & 99.8 & 97.5 & 96.5 \\ 
  \textbf{100 } & \textbf{100 } & 0.0 & 99.8 \\ 
  15.1 & \textbf{100 } & 0.0 & 99.6 \\ 
  \textbf{100 } & \textbf{100 } & \textbf{100 } & 99.7 \\ 
  \textbf{100 } & \textbf{100 } & 96.7 & 99.7 \\ 
  34.9 & 8.5 & \textbf{82.6 } & 60.5 \\ 
  19.7 & 90.1 & 4.2 & \textbf{99.8 } \\ 
  20.9 & 16.3 & 74.8 & \textbf{91.3 } \\ 
  79.1 & 89.0 & 0.0 & \textbf{99.7 } \\ 
  0.0 & 79.0 & 0.0 & \textbf{99.3 } \\ 
  76.4 & 89.9 & 83.7 & \textbf{99.8 } \\ 
  90.3 & 90.0 & 99.2 & \textbf{99.8 } \\ 
  69.9 & 10.0 & \textbf{92.4 } & 74.4 \\ 
  0.1 & 63.4 & 0.4 & \textbf{99.4 } \\ 
  64.5 & 35.9 & 87.8 & \textbf{96.0 } \\ 
  35.4 & 63.7 & 0.0 & \textbf{99.5 } \\ 
  0.0 & 62.2 & 0.0 & \textbf{99.4 } \\ 
  8.9 & 63.5 & 71.4 & \textbf{99.4 } \\ 
  67.7 & 63.4 & 99.0 & \textbf{99.4 } \\ 
  79.6 & 11.4 & \textbf{80.8 } & 61.4 \\ 
  32.2 & 93.8 & 2.3 & \textbf{98.5 } \\ 
  \textbf{95.1 } & 86.7 & 72.4 & 93.6 \\ 
  88.6 & 93.3 & 0.0 & \textbf{98.1 } \\ 
  24.3 & 92.8 & 0.0 & \textbf{98.4 } \\ 
  76.1 & 93.5 & 39.2 & \textbf{98.6 } \\ 
  94.0 & 93.7 & 97.0 & \textbf{98.5 } \\ 
  \textbf{6.9 } & 4.9 & 3.4 & 5.5 \\ 
  0.0 & 2.8 & 0.0 & \textbf{6.7 } \\ 
  5.1 & 4.6 & 1.0 & \textbf{5.8 } \\ 
  4.5 & 3.3 & 0.0 & \textbf{6.4 } \\ 
  0.0 & 3.7 & 0.0 & \textbf{6.5 } \\ 
  2.8 & 2.9 & 0.1 & \textbf{6.7 } \\ 
  4.5 & 2.7 & 3.3 & \textbf{6.8 } \\ 
   \hline
\end{tabular}

\medskip
\caption{Setting of Table \ref{tab:comp} with different $n$.  For each parameter setting the maximum in each row is printed in bold.
\newline
 Observations: With increasing dimension the power increases and the optimal test might change. The tests without the distributional transform can be more powerful than the copula versions. $dHSIC$ becomes liberal with increasing dimension.} 
\label{tab:vary-n}
\end{table}

\begin{table}[ht]
\centering
\begin{tabular}{lllllllllll}
 copula & type & $dH^{0.1}_{cop}$ & $dH^{0.2}_{cop}$ & $dH^{0.5}_{cop}$ & $dH^{0.75}_{cop}$ & $dH^{1}_{cop}$ & $dH^{2}_{cop}$ & $dH^{3}_{cop}$ & $dH^{4}_{cop}$ & $dH^{5}_{cop}$ \\ 
  \hline
normal & B & 9.3 & 21.0 & \textbf{32.6 } & 31.9 & 31.9 & 31.0 & 30.6 & 30.6 & 30.6 \\ 
   & CA & 10.4 & 36.3 & 76.6 & 83.0 & 84.5 & 85.4 & \textbf{85.5 } & \textbf{85.5 } & \textbf{85.5 } \\ 
   & P1 & 8.3 & 26.5 & 59.1 & 63.7 & 64.8 & 64.9 & \textbf{65.1 } & 64.9 & 64.9 \\ 
   & P20 & 9.6 & 35.1 & 75.3 & 81.4 & 83.1 & 84.9 & 85.2 & 85.3 & \textbf{85.4 } \\ 
   & RP & 12.6 & 37.8 & 75.3 & 80.6 & 82.4 & 83.5 & 83.9 & \textbf{84.0 } & \textbf{84.0 } \\ 
   & SA & 11.0 & 35.9 & 76.8 & 82.9 & 84.3 & 85.3 & 85.7 & 85.8 & \textbf{85.9 } \\ 
   & U & 11.3 & 36.1 & 76.7 & 82.4 & 84.6 & 85.4 & 85.4 & \textbf{85.6 } & 85.5 \\ 
  t1 & B & 8.7 & 22.1 & \textbf{31.6 } & 30.8 & 30.8 & 30.2 & 29.8 & 29.6 & 29.5 \\ 
   & CA & 97.3 & \textbf{100 } & 99.9 & 94.7 & 89.1 & 84.9 & 84.5 & 84.5 & 84.4 \\ 
   & P1 & 50.8 & \textbf{83.2 } & 69.6 & 66.3 & 66.7 & 67.7 & 67.7 & 68.1 & 68.1 \\ 
   & P20 & 96.6 & \textbf{100 } & 99.9 & 94.9 & 89.0 & 83.7 & 83.6 & 83.7 & 83.9 \\ 
   & RP & 96.1 & \textbf{100 } & 99.8 & 92.7 & 87.1 & 83.3 & 83.1 & 83.1 & 83.1 \\ 
   & SA & 97.5 & \textbf{100 } & 99.9 & 94.6 & 89.1 & 85.1 & 84.7 & 84.5 & 84.5 \\ 
   & U & 97.3 & \textbf{100 } & 99.9 & 94.8 & 89.1 & 85.1 & 84.6 & 84.3 & 84.3 \\ 
  t3 & B & 9.1 & 20.3 & \textbf{30.2 } & 30.1 & 29.0 & 28.5 & 28.1 & 27.9 & 27.9 \\ 
   & CA & 30.8 & 80.7 & \textbf{86.7 } & 84.1 & 83.5 & 83.9 & 83.9 & 84.1 & 84.4 \\ 
   & P1 & 15.0 & 39.2 & 59.9 & 63.8 & 65.0 & 66.5 & 66.6 & \textbf{66.8 } & 66.7 \\ 
   & P20 & 27.1 & 79.1 & \textbf{86.3 } & 83.2 & 82.5 & 82.9 & 82.9 & 82.9 & 82.9 \\ 
   & RP & 27.8 & 75.3 & \textbf{83.7 } & 81.4 & 81.1 & 80.6 & 80.9 & 81.2 & 81.4 \\ 
   & SA & 31.4 & 80.2 & \textbf{86.4 } & 84.0 & 83.4 & 83.9 & 84.0 & 84.0 & 84.0 \\ 
   & U & 30.4 & 80.4 & \textbf{86.7 } & 84.0 & 83.5 & 83.9 & 83.9 & 84.1 & 84.3 \\ 
  clayton & B & 7.4 & 19.5 & \textbf{31.1 } & 30.9 & 30.4 & 28.2 & 27.9 & 27.9 & 27.9 \\ 
   & CA & 14.8 & 40.6 & 77.5 & 81.7 & 82.7 & 83.7 & 83.8 & \textbf{84.0 } & \textbf{84.0 } \\ 
   & P1 & 9.2 & 24.9 & 48.8 & 51.8 & 52.1 & \textbf{52.5 } & 52.4 & 52.4 & 52.3 \\ 
   & P20 & 13.9 & 39.6 & 76.9 & 81.5 & 82.2 & \textbf{83.0 } & 82.9 & 82.9 & \textbf{83.0 } \\ 
   & RP & 15.5 & 41.2 & 75.6 & 79.7 & 79.8 & 80.8 & 81.1 & \textbf{81.2 } & \textbf{81.2 } \\ 
   & SA & 15.2 & 41.0 & 77.4 & 81.9 & 82.4 & \textbf{83.8 } & \textbf{83.8 } & \textbf{83.8 } & \textbf{83.8 } \\ 
   & U & 14.9 & 40.3 & 77.5 & 81.9 & 82.6 & 83.7 & \textbf{83.9 } & \textbf{83.9 } & \textbf{83.9 } \\ 
  frank & B & 12.0 & 31.7 & \textbf{42.2 } & 41.3 & 40.4 & 39.1 & 38.8 & 38.8 & 38.6 \\ 
   & CA & 16.1 & 56.3 & 84.1 & 85.8 & \textbf{86.3 } & 86.0 & 86.0 & 85.9 & 85.9 \\ 
   & P1 & 11.6 & 36.9 & 64.3 & 66.1 & \textbf{66.3 } & 65.9 & 65.9 & 66.0 & 66.0 \\ 
   & P20 & 16.7 & 56.0 & 83.8 & 85.8 & \textbf{86.0 } & 85.7 & 85.7 & 85.7 & 85.6 \\ 
   & RP & 17.0 & 55.3 & 83.0 & \textbf{85.2 } & 84.6 & 84.2 & 84.3 & 84.4 & 84.4 \\ 
   & SA & 17.2 & 55.5 & 84.3 & 85.8 & \textbf{86.3 } & \textbf{86.3 } & \textbf{86.3 } & \textbf{86.3 } & 86.2 \\ 
   & U & 16.3 & 56.1 & 84.1 & 85.5 & \textbf{86.3 } & 86.1 & 86.0 & 86.2 & 86.2 \\ 
  gumbel & B & 9.3 & 24.3 & \textbf{35.6 } & 33.5 & 32.7 & 31.1 & 30.9 & 30.5 & 30.4 \\ 
   & CA & 26.0 & 48.1 & 75.2 & 80.7 & 81.1 & 82.8 & \textbf{83.0 } & \textbf{83.0 } & \textbf{83.0 } \\ 
   & P1 & 21.3 & 36.1 & 59.8 & 63.6 & 64.3 & 66.0 & 66.2 & 66.2 & \textbf{66.4 } \\ 
   & P20 & 26.1 & 47.9 & 73.5 & 78.7 & 79.7 & 81.4 & \textbf{81.6 } & \textbf{81.6 } & \textbf{81.6 } \\ 
   & RP & 26.5 & 46.2 & 74.4 & 79.5 & 80.7 & 82.4 & 82.6 & \textbf{82.7 } & \textbf{82.7 } \\ 
   & SA & 26.0 & 48.4 & 75.0 & 80.2 & 81.6 & 82.9 & 83.0 & \textbf{83.2 } & \textbf{83.2 } \\ 
   & U & 26.0 & 47.6 & 75.2 & 80.4 & 81.3 & 82.5 & 82.6 & \textbf{82.7 } & \textbf{82.7 } \\ 
  indep & B & \textbf{6.3 } & 5.9 & 6.2 & 5.8 & 5.7 & 6.0 & 6.0 & 6.1 & 6.1 \\ 
   & CA & 3.8 & \textbf{6.3 } & 5.1 & 4.3 & 4.5 & 4.5 & 4.3 & 4.3 & 4.3 \\ 
   & P1 & 5.1 & 5.2 & 5.2 & 4.9 & 5.1 & 5.2 & \textbf{5.3 } & \textbf{5.3 } & \textbf{5.3 } \\ 
   & P20 & 5.1 & \textbf{5.3 } & 5.1 & 4.7 & 4.4 & 4.8 & 4.6 & 4.6 & 4.6 \\ 
   & RP & 4.9 & \textbf{5.7 } & 5.3 & 4.4 & 4.4 & 4.2 & 4.2 & 4.3 & 4.3 \\ 
   & SA & 4.0 & \textbf{6.3 } & 5.1 & 4.3 & 4.7 & 4.7 & 4.7 & 4.4 & 4.4 \\ 
   & U & 3.8 & \textbf{6.0 } & 4.9 & 4.3 & 4.5 & 4.7 & 4.5 & 4.4 & 4.3 \\ 
   \hline
\end{tabular}

\medskip
\caption{Comparison of variants of dHSIC for the setting of Table \ref{tab:comp} with $N=100$ and $n = 5$. The maximum in each row is printed in bold.\newline
 Observations:  The optimal parameter depends on the marginal distribution. In the implementation \cite{PfisPete2019} a heuristic parameter selection is implemented, this yields for uniform marginals a $\delta$ of about $0.2$. Thus the table indicates that in this setting the method is not reliable. It is surprising that for $\delta = 0.2$ and $\delta = 0.5$ the test seems to become liberal.} 
\label{tab:dhsic-variants}
\end{table}

\begin{table}[ht]
\centering
\begin{tabular}{llllll}
  & &\multicolumn{4}{c}{$N = 100, n = 5, \tau = 0.05$} \\
copula & type & $\overline{\Mskript}$ & $\overline{\Mskript}_{cop}$ & $dHSIC$ & $dHSIC_{cop}$ \\ 
  \hline
normal & B & 9.4 & 10.9 & \textbf{13.6 } & 11.4 \\ 
   & CA & 1.3 & 21.4 & 1.3 & \textbf{28.9 } \\ 
   & P1 & \textbf{24.7 } & 15.9 & 15.2 & 18.8 \\ 
   & P20 & 23.5 & 22.2 & 0.0 & \textbf{29.0 } \\ 
   & RP & 0.0 & 20.8 & 0.0 & \textbf{27.7 } \\ 
   & SA & 10.8 & 21.5 & 8.1 & \textbf{28.5 } \\ 
   & U & 22.4 & 21.1 & 25.9 & \textbf{28.7 } \\ 
  t1 & B & 9.7 & 10.1 & \textbf{13.7 } & 12.0 \\ 
   & CA & \textbf{100 } & \textbf{100 } & \textbf{100 } & 46.4 \\ 
   & P1 & \textbf{99.6 } & 71.8 & 46.5 & 29.5 \\ 
   & P20 & \textbf{100 } & \textbf{100 } & \textbf{100 } & 45.9 \\ 
   & RP & 41.6 & \textbf{100 } & 0.1 & 43.8 \\ 
   & SA & \textbf{100 } & \textbf{100 } & 99.0 & 46.6 \\ 
   & U & \textbf{100 } & \textbf{100 } & 14.0 & 46.7 \\ 
  t3 & B & 9.1 & 7.8 & \textbf{11.8 } & 8.5 \\ 
   & CA & 90.7 & 80.1 & \textbf{99.5 } & 36.4 \\ 
   & P1 & \textbf{87.4 } & 36.2 & 21.7 & 24.3 \\ 
   & P20 & \textbf{99.4 } & 77.2 & 49.5 & 36.1 \\ 
   & RP & 11.6 & \textbf{71.3 } & 0.0 & 33.8 \\ 
   & SA & \textbf{99.8 } & 80.1 & 83.6 & 36.4 \\ 
   & U & 79.6 & \textbf{79.9 } & 20.1 & 36.5 \\ 
  clayton & B & 8.3 & 8.7 & \textbf{12.4 } & 12.1 \\ 
   & CA & 5.3 & 23.9 & 3.9 & \textbf{28.5 } \\ 
   & P1 & 11.9 & 11.9 & 10.2 & \textbf{16.3 } \\ 
   & P20 & 25.8 & 22.9 & 0.5 & \textbf{28.6 } \\ 
   & RP & 0.0 & 20.7 & 0.0 & \textbf{25.6 } \\ 
   & SA & 24.3 & 23.3 & 12.3 & \textbf{28.3 } \\ 
   & U & 24.3 & 23.7 & 24.0 & \textbf{28.7 } \\ 
  frank & B & 14.5 & 13.5 & \textbf{16.7 } & 12.5 \\ 
   & CA & 0.9 & 26.7 & 2.0 & \textbf{30.3 } \\ 
   & P1 & \textbf{24.6 } & 19.1 & 14.4 & 19.5 \\ 
   & P20 & 21.7 & 25.2 & 0.2 & \textbf{28.6 } \\ 
   & RP & 0.1 & 24.9 & 0.0 & \textbf{28.2 } \\ 
   & SA & 6.9 & 26.3 & 9.2 & \textbf{29.9 } \\ 
   & U & 26.4 & 26.6 & 26.9 & \textbf{30.5 } \\ 
  indep & B & 4.2 & \textbf{4.8 } & 4.3 & \textbf{4.8 } \\ 
   & CA & 1.0 & 5.0 & 1.5 & \textbf{5.3 } \\ 
   & P1 & 4.3 & 4.7 & 2.8 & \textbf{5.4 } \\ 
   & P20 & \textbf{5.6 } & 5.3 & 0.2 & 5.1 \\ 
   & RP & 0.0 & \textbf{5.6 } & 0.0 & 4.7 \\ 
   & SA & 3.7 & 4.8 & 1.7 & \textbf{4.9 } \\ 
   & U & 5.0 & 4.9 & 4.1 & \textbf{5.1 } \\ 
  gumbel & B & \textbf{14.9 } & 10.8 & 14.1 & 9.8 \\ 
   & CA & 14.8 & \textbf{37.9 } & 2.4 & 30.7 \\ 
   & P1 & \textbf{56.3 } & 31.0 & 13.2 & 23.4 \\ 
   & P20 & \textbf{49.1 } & 37.0 & 0.3 & 30.1 \\ 
   & RP & 10.5 & \textbf{37.8 } & 0.0 & 30.3 \\ 
   & SA & 37.3 & \textbf{38.4 } & 6.6 & 30.4 \\ 
   & U & \textbf{38.2 } & 38.1 & 25.6 & 30.5 \\ 
   \hline
\end{tabular}

\begin{tabular}{llll}
  \multicolumn{4}{c}{$N = 100, n = 5, \tau = 0.2$} \\
$\overline{\Mskript}$ & $\overline{\Mskript}_{cop}$ & $dHSIC$ & $dHSIC_{cop}$ \\ 
  \hline
92.5 & 84.4 & \textbf{97.7 } & 84.0 \\ 
  20.6 & \textbf{100 } & 35.0 & \textbf{100 } \\ 
  \textbf{100 } & 99.1 & \textbf{100 } & 99.5 \\ 
  \textbf{100 } & \textbf{100 } & 26.4 & \textbf{100 } \\ 
  5.9 & \textbf{100 } & 34.9 & \textbf{100 } \\ 
  98.2 & \textbf{100 } & 98.4 & \textbf{100 } \\ 
  \textbf{100 } & \textbf{100 } & \textbf{100 } & \textbf{100 } \\ 
  93.1 & 87.4 & \textbf{98.5 } & 85.4 \\ 
  \textbf{100 } & \textbf{100 } & \textbf{100 } & \textbf{100 } \\ 
  \textbf{100 } & 99.8 & 98.4 & 98.5 \\ 
  \textbf{100 } & \textbf{100 } & \textbf{100 } & 99.9 \\ 
  63.1 & \textbf{100 } & 42.5 & 99.9 \\ 
  \textbf{100 } & \textbf{100 } & \textbf{100 } & \textbf{100 } \\ 
  \textbf{100 } & \textbf{100 } & 99.1 & \textbf{100 } \\ 
  94.6 & 85.9 & \textbf{98.1 } & 84.7 \\ 
  97.1 & \textbf{100 } & \textbf{100 } & \textbf{100 } \\ 
  \textbf{100 } & 99.2 & 99.0 & 99.4 \\ 
  \textbf{100 } & \textbf{100 } & 98.3 & \textbf{100 } \\ 
  28.7 & \textbf{100 } & 34.6 & \textbf{100 } \\ 
  \textbf{100 } & \textbf{100 } & \textbf{100 } & \textbf{100 } \\ 
  \textbf{100 } & \textbf{100 } & \textbf{100 } & \textbf{100 } \\ 
  92.3 & 84.3 & \textbf{96.9 } & 83.4 \\ 
  55.0 & \textbf{100 } & 67.5 & \textbf{100 } \\ 
  98.0 & 97.8 & 97.6 & \textbf{98.5 } \\ 
  99.9 & \textbf{100 } & 44.5 & \textbf{100 } \\ 
  0.3 & \textbf{100 } & 45.6 & \textbf{100 } \\ 
  99.0 & \textbf{100 } & 99.0 & \textbf{100 } \\ 
  \textbf{100 } & \textbf{100 } & \textbf{100 } & \textbf{100 } \\ 
  99.2 & 94.6 & \textbf{99.6 } & 92.4 \\ 
  2.6 & \textbf{100 } & 38.5 & \textbf{100 } \\ 
  99.7 & \textbf{99.8 } & 99.3 & \textbf{99.8 } \\ 
  \textbf{100 } & \textbf{100 } & 55.5 & \textbf{100 } \\ 
  0.7 & \textbf{100 } & 15.9 & \textbf{100 } \\ 
  93.8 & \textbf{100 } & 99.7 & \textbf{100 } \\ 
  \textbf{100 } & \textbf{100 } & \textbf{100 } & \textbf{100 } \\ 
  4.2 & \textbf{4.8 } & 4.3 & \textbf{4.8 } \\ 
  1.0 & 5.0 & 1.5 & \textbf{5.3 } \\ 
  4.3 & 4.7 & 2.8 & \textbf{5.4 } \\ 
  \textbf{5.6 } & 5.3 & 0.2 & 5.1 \\ 
  0.0 & \textbf{5.6 } & 0.0 & 4.7 \\ 
  3.7 & 4.8 & 1.7 & \textbf{4.9 } \\ 
  5.0 & 4.9 & 4.1 & \textbf{5.1 } \\ 
  \textbf{97.8 } & 89.4 & 97.3 & 86.3 \\ 
  52.5 & \textbf{100 } & 56.1 & \textbf{100 } \\ 
  \textbf{99.9 } & 99.6 & 97.9 & 99.7 \\ 
  99.9 & \textbf{100 } & 38.8 & 99.9 \\ 
  43.8 & \textbf{100 } & 5.0 & \textbf{100 } \\ 
  99.3 & \textbf{100 } & 97.5 & \textbf{100 } \\ 
  \textbf{100 } & \textbf{100 } & \textbf{100 } & \textbf{100 } \\ 
   \hline
\end{tabular}

\medskip
\caption{Setting of Table \ref{tab:comp} with $\tau=0.05$ and $0.2.$ For each parameter setting the maximum in each row is printed in bold.
\newline
 Observations: A change of the strength of dependence via Kendall's tau seems not to structurally change the relative performance of the measures. 
} 
\label{tab:vary-tau}
\end{table}

\end{document}